\def\Pr{\mathop{\rm Pr}}
\def\B{{\mathcal B}}
\def\P{{\mathcal P}}
\def\sX{{\mathds X}}
\def\sU{{\mathds U}}
\def\sU{{\mathds U}}
\pgfplotsset{compat=1.6}
\newtheorem{lemma}{Lemma}
\newtheorem{theorem}{Theorem}
\newtheorem{prop}{Proposition}
\newtheorem{assumption}{Assumption}
\newtheorem{corollary}{\bf{Corollary}}[section]
\theoremstyle{definition}
\newtheorem{example}{Example}
\newtheorem{definition}{Definition}
\theoremstyle{remark}
\newtheorem*{remark}{Remark}
\newcommand{\R}{\mathds{R}}
\newcommand{\Zplus}{\mathds{Z}_+}
\newcommand{\N}{\mathds{N}}
\newcommand{\dd}{\mathrm{d}}
\newcommand{\sy}[1]{{\color{black} #1}}
\newcommand{\adk}[1]{{\color{black} #1}}
\pgfplotsset{soldot/.style={color=blue,only marks,mark=*}}
\pgfplotsset{holdot/.style={color=blue,fill=white,only marks,mark=*}}
\begin{document}
\sloppy
\title{Near Optimal Approximations and Finite Memory Policies for POMPDs with Continuous Spaces \\
{\small {Dedicated to Professor Peter E. Caines, on the occasion of his 80th birthday.}}

\thanks{E. Bayraktar is partially supported by the National Science Foundation under grant DMS-2106556 and by the Susan M. Smith chair. \\
S. Yuksel is partially supported by
the Natural Sciences and Engineering Research Council (NSERC) of Canada.}
}

\author{Ali Devran Kara, Erhan Bayraktar and  Serdar Y\"uksel 
\thanks{A. D. Kara is with the Department of Mathematics, Florida State University, Tallahassee, USA, Email: \{akara@fsu.edu\}, E. Bayraktar is with the Department of Mathematics, University of Michigan, Ann Arbor, USA, Email: \{erhan@umich.edu\}. S. Y\"uksel is with the Department of Mathematics and Statistics,
     Queen's University, Kingston, ON, Canada,
     Email: \{yuksel@queensu.ca\}}
     }
\maketitle

\begin{abstract}

We study an approximation method for partially observed Markov decision processes (POMDPs) with continuous spaces. Belief MDP reduction, which has been the standard approach to study POMDPs requires rigorous approximation methods for practical applications, due to the state space being lifted to the space of probability measures. Generalizing recent work, in this paper we present rigorous approximation methods via discretizing the observation space and constructing a fully observed finite MDP model using a finite length history of the discrete observations and control actions. We show that the resulting policy is near-optimal under some regularity assumptions on the channel, and under certain controlled filter stability requirements for the hidden state process. Furthermore, by quantizing the measurements, we are able to utilize refined filter stability conditions. We also provide a Q learning algorithm that uses a finite memory of discretized information variables, and prove its convergence to the optimality equation of the finite fully observed MDP constructed using the approximation method.
\end{abstract}


\section{Introduction}

Partially Observed Markov Decision Problems (POMDPs) offer a practically rich and relevant, and mathematically challenging model. Analysis of optimality and computation of solutions become complex even for finite models.
Hence, approximations are required for efficient (near) optimality and control analysis. We present a brief review in the following.

Most of the studies in the literature are algorithmic and computational contributions such as \cite{porta2006point,ZhHa01}. These studies develop computational algorithms, utilizing structural convexity/concavity properties of the value function under the discounted cost criterion. \cite{spaan2005perseus} provides an insightful algorithm which may be regarded as a quantization of the belief space; however, no rigorous convergence results are provided. \cite{smith2012point,pineau2006anytime} also present quantization based algorithms for the belief state, where the state, measurement, and the action sets are finite. 

Rigorous approximation techniques for POMDPs have mostly focused on finite state or finite observation models. Some works that study POMDPs with continuous state or observation spaces include \cite{SYLTAC2017POMDP,SaYuLi15c,zhou2008density,zhou2010solving}. 

\cite{SYLTAC2017POMDP,SaYuLi15c} introduce a rigorous approximation analysis after establishing weak continuity conditions on the transition kernel defining the (belief-MDP) via the non-linear filter (\cite{FeKaZa12, KSYWeakFellerSysCont}), and  it is shown that the finite model approximations obtained through quantization are asymptotically optimal as the number of quantization bins increases. The finite model is constructed by choosing a finite number of belief points which are sufficiently close to any point in the original belief space, however, this may not be an easy task in general, as the belief space consists of probability measures.

Another rigorous set of studies with continuous observation and state spaces is \cite{zhou2008density,zhou2010solving} where the authors provide an explicit quantization method for the set of probability measures containing the belief states, where the state space is parametrically representable under strong density regularity conditions. The quantization is done through the approximations as measured by Kullback-Leibler divergence (relative entropy) between probability density functions. 


We refer the reader to the survey papers by \cite{Lov91-(b),Whi91,hansen2013solving} and \cite{Kri16} for further structural results as well as algorithmic and computational methods for approximating POMDPs. Notably, for POMDPs \cite{Kri16} presents structural results on optimal policies under monotonicity conditions of the value function in the belief variable.

\cite{yu2008near} studies near optimality of finite window policies for average cost problems where the state, action and observation spaces are finite; under the condition that the liminf and limsup of the average cost are equal and independent of the initial state, the paper establishes the near-optimality of (non-stationary) finite memory policies.
In another related direction, \cite{white1994finite} study finite memory approximation techniques for POMDPs with finite state, action and measurements. The POMDP is reduced to a belief MDP and the worst and best case predictors prior to the $N$ most recent information variables are considered to build an approximate belief MDP. The original value function is bounded using these approximate belief MDPs that use only finite memory, where the finiteness of the state space is critically used. 

Our paper generalizes several recent studies \cite{kara2021convergence,kara2020near,demirci2023average,demirciRefined2023} and the review paper \cite{tutorialkara2024partially}. In these studies, finite memory approximations as well as quantized belief approximations are studied, and near optimality of the approximations and their associated Q learning algorithms are provided under certain controlled filter stability assumptions, both for discounted and average cost criteria. However, the methods provided in these papers are tailored towards finite observation spaces, and the efficiency of the algorithms can be affected significantly for large observation spaces. The current paper addresses this gap. 

{We dedicate this paper to Professor Peter E. Caines, who has shaped stochastic control theory fundamentally in a variety of directions. In the context of this paper on control with partial information, the contributions of Professor Caines in a variety of contexts include: filtering theory and associated stability analysis \cite{Caines}, partially observable jump parameter systems \cite{caines1995adaptive}, control with Poisson measurements \cite{ades2000stochastic}, mean-field games with noisy measurements \cite{sen2019mean}, observer design for automata \cite{caines1991classical},  and filtering in Riemannian manifolds \cite{ng1984nonlinear}. A particular connection with our current paper is that observability entails filter stability (\cite{van2010nonlinear,mcdonald2018stability,MYRobustControlledFS}) which then leads to near optimality of sliding window control policies, not unlike the case in classical stochastic linear systems theory \cite{Caines} where observability implies near optimality of finite memory output feedback. Our paper makes this analysis precise for a large class of systems.}

\sy{
{\bf Contributions.} In this paper we develop rigorous approximations and learning results for POMDPs with Borel state, action, and measurement spaces. Furthermore, under regularity conditions, we obtain rigorous (finite dimensional) approximations, and under explicit filter stability conditions, we obtain near optimality bounds of finite window based policies which then leads to finite model approximations. 

Notably, we quantize either the state space or the measurement space, \sy{noting that it is well known by now that action spaces can be quantized with arbitrarily low loss in performance under weak continuity conditions \cite{saldi2014near}\cite[Theorem 3.16]{SaLiYuSpringer}}

Quantization of the hidden state space results in a belief state process which takes values from a simplex. Hence, the problem can be solved via this simplex valued approximate belief state and leads to a finite-dimensional filter which can be computer with relative ease. We present sufficient conditions to guarantee the near optimality of this discretization method in Section \ref{hidden_state_disc}.

Furthermore, in Section \ref{FiniteMemoryCompSec}, we study the performance of finite window based policies and relate the performance loss due to finite window restriction to controlled filter stability. 

In Section \ref{obs_disc}, towards a finite model approximation, we quantize the measurements and obtain performance loss bounds due to the approximations of the measurements to finite ones. This quantization leads to a garbling of information and therefore the performance loss can be obtained with such a perspective, as we show in the paper.

The analysis above culminates in a joint analysis of the performance loss due to measurement quantization and finite window restriction in Section \ref{FiniteMemDiscreteObs}. 

By quantizing the measurement space we are able to utilize refined filter stability conditions. In particular, discretization on the observation variables results in an approximate non-linear filter where the conditioning is based on a less informative information process. We present explicit conditions under which the approximate filter is stable as well. This in turn implies near-optimality of the discretized finite-window information variables.

Finally, Section \ref{QLearningApproximateM} applies the analysis above to reinforcement learning via finite memory Q-learning, which is shown to converge to near optimality under the conditions noted above.

Compared to the results in the literature, this paper is the first one to our knowledge which considers the case with continuous space measurements and establishes explicit conditions on the near optimality of finite window policies, and also with a learning algorithm under convergence guarantees to near optimality.

} 


{
Before we start out analysis, building on \cite{KSYContQLearning}, we first review some technical tools we will need along the paper.

\subsection{Convergence Notions for Probability Measures and Regularity Properties of Transition Kernels}
For the analysis of the technical results, we will use different notions of convergence for sequences of probability measures.

Two important notions of convergence for sequences of probability measures are weak convergence and convergence under total variation. For some $N\in\N$, a sequence $\{\mu_n,n\in\N\}$ in $\mathcal{P}(\mathds{X})$ is said to converge to $\mu\in\mathcal{P}(\mathds{X})$ \emph{weakly} if $\int_{\mathds{X}}c(x)\mu_n(dx) \to \int_{\mathds{X}}c(x)\mu(dx)$ for every continuous and bounded $c:\mathds{X} \to \R$.

  For probability measures $\mu,\nu \in \mathcal{P}(\mathds{X})$, the \emph{total variation} metric is given by
  \begin{align*}
    \|\mu-\nu\|_{TV}&=2\sup_{B\in\mathcal{B}(\mathds{X})}|\mu(B)-\nu(B)|=\sup_{f:\|f\|_\infty \leq 1}\left|\int f(x)\mu(\dd x)-\int f(x)\nu(\dd x)\right|,
  \end{align*}
  \noindent where the supremum is taken over all measurable real $f$ such that $\|f\|_\infty=\sup_{x\in\mathds{X}}|f(x)|\leq 1$. A sequence $\mu_n$ is said to converge in total variation to $\mu \in \mathcal{P}(\mathds{X})$ if $\|\mu_n-\mu\|_{TV}\to 0$.

 {Finally, for probability measures $\mu,\nu \in \mathcal{P}(\mathds{X})$ with finite first order moments (that is, $\int \|x\| \, d\nu$ and $\int \|x\| \, d\mu $ are finite)}, the \emph{first order Wasserstein} distance is defined as 
\begin{align*}
W_1(\mu,\nu)=\inf_{\Gamma(\mu,\nu)}E[|X-Y|]=\sup_{f: Lip(f)\leq 1}|\int f(x)\mu(dx)-\int f(x)\nu(dx)|
\end{align*}
where $\Gamma(\mu,\nu)$ denotes the all possible couplings of $X$ and $Y$ with marginals $X\sim\mu$ and $Y\sim\nu$, {and 
\begin{align}
Lip(f) := \sup_{e \neq e'} \frac{f(e) - f(e')}{\|e - e'\|},\nonumber
\end{align}}
 and the second {equality follows from the dual formulation of the Wasserstein distance \cite[Remark 6.5]{Vil09}}. Note that the weak convergence and the Wasserstein convergence are equivalent if the underlying space is compact.

We can now define the following regularity properties for the transition kernels:
\begin{itemize}
\item $\mathcal{T}(\cdot|x,u)$ is said to be weakly continuous in $(x,u)$, if $\mathcal{T}(\cdot|x_n,u_n)\to \mathcal{T}(\cdot|x,u)$ weakly for any $(x_n,u_n)\to (x,u)$.
\item $\mathcal{T}(\cdot|x,u)$ is said to be continuous under total variation in  $(x,u)$, if $\|\mathcal{T}(\cdot|x_n,u_n)- \mathcal{T}(\cdot|x,u)\|_{TV}\to 0$  for any $(x_n,u_n)\to (x,u)$.
\item $\mathcal{T}(\cdot|x,u)$ is said to be continuous under the first order Wasserstein distance in $(x,u)$, if \[W_1(\mathcal{T}(\cdot|x_n,u_n), \mathcal{T}(\cdot|x,u))\to 0\]  for any $(x_n,u_n)\to (x,u)$. {To ensure continuity of $\mathcal{T}$ with respect to the first order Wasserstein distance, in addition to weak continuity, we may assume that there exists a function $g:[0,\infty) \rightarrow [0,\infty)$ such that as $t \to \infty$, $\frac{g(t)}{t} \uparrow \infty$, and 
$$\sup_{(x,u) \in K \times \mathds{U}} \int g(\|y\|) \, \mathcal{T}(dy|x,u) < \infty$$ for any compact $K \subset \mathds{X}$. Note that the latter condition implies uniform integrability of the collection of random variables with probability measures ${\cal T}(dx_1|X_0=x_n,U_0=u_n)$ as $(x_n, u_n) \to (x,u)$, which coupled with weak convergence can be shown to imply convergence under the Wasserstein distance.}
\end{itemize}
\begin{example}\label{examples}
Some example models satisfying these regularity properties are as follows:
\begin{itemize}
\item[(i)] For a model with the dynamics $x_{t+1}=f(x_t,u_t,w_t)$, the induced transition kernel $\mathcal{T}(\cdot|x,u)$ is weakly continuous in $(x,u)$ if $f(x,u,w)$ is a continuous function of $(x,u)$, since for any continuous and bounded function $g$
\begin{align*}
&\int g(x_1)\mathcal{T}(dx_1|x_n,u_n)=\int g(f(x_n,u_n,w))\mu(dw)\\
&\to\int g(f(x,u,w))\mu(dw)=\int g(x_1)\mathcal{T}(dx_1|x,u)
\end{align*}
where $\mu$ denotes the probability measure of the noise process.
{If we also have that $\mathds{X}$ is compact, the transition kernel $\mathcal{T}(\cdot|x,u)$ is also continuous under the first order Wasserstein distance}.

\item[(ii)] For a model with the dynamics $x_{t+1}=f(x_t,u_t)+w_t$, the induced transition kernel $\mathcal{T}(\cdot|x,u)$ is continuous under total variation in $(x,u)$ if $f(x,u)$ is a continuous function of $(x,u)$, and $w_t$ admits a continuous density function. 
\item[(iii)] In general, if the transition kernel admits a continuous density function $f$ so that $\mathcal{T}(dx_1|x,u)=f(x_1,x,u)\lambda(dx_1)$, then $\mathcal{T}(dx_1|x,u)$ is continuous in total variation. This follows from an application of  Scheff\'e's Lemma \cite[Theorem 16.12]{Bil95}. In particular, we can write that
\begin{align*}
\|\mathcal{T}(\cdot|x_n,u_n)-\mathcal{T}(\cdot|x,u)\|_{TV}=\int_{\mathds{X}}|f(x_1,x_n,u_n)-f(x_1,x,u)|\lambda(dx_1)\to 0.
\end{align*}
\item[(iv)] For a model with the dynamics $x_{t+1}=f(x_t,u_t,w_t)$, if $f$ is Lipschitz continuous in $(x,u)$ pair such that, there exists some $\alpha<\infty$ with
\begin{align*}
|f(x_n,u_n,w)-f(x,u,w)|\leq \alpha\left(|x_n-x|+|u_n-u|\right),
\end{align*}
we can then bound the first order Wasserstein distance between the corresponding kernels with $\alpha$:
\begin{align*}
&W_1\left(\mathcal{T}(\cdot|x_n,u_n),\mathcal{T}(\cdot|x,u)\right)=\sup_{Lip(g)\leq 1}\left|\int g(x_1)\mathcal{T}(dx_1|x_n,u_n)-\int g(x_1)\mathcal{T}(dx_1|x,u) \right|\\
&=\sup_{Lip(g)\leq 1}\left|\int g(f(x_n,u_n,w))\mu(dw)-\int g(f(x,u,w))\mu(dw)\right|\\
&\leq \int \left|f(x_n,u_n,w)-f(x,u,w)\right| \mu(dw)\leq \alpha\left(|x_n-x|+|u_n-u|\right).
\end{align*}
\end{itemize}
\end{example}
}

\section{Partially Observed Markov Decision Processes and Standard Solution Methods}
Let $\mathds{X} \subset \mathds{R}^m$ denote a Borel set which is the state space of a partially observed controlled Markov process for some $m\in\mathds{N}$. Let
$\mathds{Y} \subset \mathds{R}^n$ be another Borel set denoting the observation space of the model, for some $n\in\mathds{N}$, and let the state be observed through an
observation channel $O$. The observation channel, $O$, is defined as a stochastic kernel (regular
conditional probability) from  $\mathds{X}$ to $\mathds{Y}$, such that
$O(\,\cdot\,|x)$ is a probability measure on the sigma algebra $\mathcal{B}(\mathds{Y})$ of $\mathds{Y}$ for every $x
\in \mathds{X}$, and $O(A|\,\cdot\,): \mathds{X}\to [0,1]$ is a Borel
measurable function for every $A \in \mathcal{B}(\mathds{Y})$. 

We will assume that the channel admits a density function $g(x,y)$ with respect to a reference measure $\lambda(dy)\in\P(\mathds{Y})$ such that
\begin{align*}
O(dy|x)=g(x,y)\lambda(dy).
\end{align*}
$\mathds{U}\subset \mathds{R}^k$ denotes the action space for some $k\in\mathds{N}$.  

An {\em admissible policy} $\gamma$ is a
sequence of control functions $\{\gamma_t,\, t\in \Zplus\}$ such
that $\gamma_t$ is measurable with respect to the $\sigma$-algebra
generated by the information variables
$$
H_t=\{Y_{[0,t]},U_{[0,t-1]}\}, \quad t \in \mathds{N}, \quad
  \quad I_0=\{Y_0\},
$$
where $U_t=\gamma_t(I_t),\quad t\in \Zplus$,
are the $\mathds{U}$-valued control
actions and 
$Y_{[0,t]} = \{Y_s,\, 0 \leq s \leq t \}, \quad U_{[0,t-1]} =
  \{U_s, \, 0 \leq s \leq t-1 \}.$ When we consider a particular realization of the information variables, we use the notation $h_t$. We define $\Gamma$ to be the set of all such admissible policies. The update rules of the system are determined by the following relationships:
\[  \Pr\bigl( (X_0,Y_0)\in B \bigr) =  \int_B \mu(dx_0)O(dy_0|x_0), \quad B\in \mathcal{B}(\mathds{X}\times\mathds{Y}), \]
where $\mu$ is the (prior) distribution of the initial state $X_0$, and
\begin{eqnarray*}
\label{eq_evol}
 &\Pr\biggl( (X_t,Y_t)\in B \, \bigg|\, (X,Y,U)_{[0,t-1]}=(x,y,u)_{[0,t-1]} \biggr)\\
& = \int_B \mathcal{T}(dx_t|x_{t-1}, u_{t-1})O(dy_t|x_t),  
\end{eqnarray*}
$B\in \mathcal{B}(\mathds{X}\times\mathds{Y}), t\in \mathds{N},$ where $\mathcal{T}$ is the transition kernel of the model which is a stochastic kernel from $\mathds{X}\times
\mathds{U}$ to $\mathds{X}$.  We let the objective of the agent (decision maker) be the minimization of the infinite horizon discounted cost, 
  \begin{align}\label{criterion1}
    J_{\beta}(\mu,O,\gamma)= \sum_{t=0}^{\infty} \beta^tE_\mu^{{O},\gamma}\left[c(X_t,U_t)\right]
  \end{align}
 \noindent for some discount factor $\beta \in (0,1)$, over the set of admissible policies $\gamma\in\Gamma$, where $c:\mathds{X}\times\mathds{U}\to\R$ is a Borel-measurable stage-wise cost function.  $E_\mu^{{O},\gamma}$ denotes the expectation with initial state probability measure $\mu$, under the transition kernel $\mathcal{T}$ the channel $O$ and the policy $\gamma$. Note that $\mu\in\mathcal{P}(\mathds{X})$, where we let $\mathcal{P}(\mathds{X})$ denote the set of probability measures on $\mathds{X}$. We define the optimal cost for the discounted infinite horizon setup as a function of the priors and the measurement channel as
\begin{align*}
  J_{\beta}^*(\mu,{O})&=\inf_{\gamma\in\Gamma} J_{\beta}(\mu,{O},\gamma).
\end{align*}

The solution of the problem in its current formulation requires one to find a control over the information variables 
\begin{align*}
h_t = \{y_0,\dots,y_{t}, u_0,\dots,u_{t-1}\}.
\end{align*}
Hence, the length of the information grown over time. In particular, the space $I_t$ lives in, $\mathds{Y}^{t}\times\mathds{U}^{t-1}$ grows exponentially. This leads one to use compressed versions of the original information variable $I_t$. A generic solution method for POMDPs then involves a compression scheme
\begin{align}\label{comp}
h_t \to z_t
\end{align} 
where the new state space $\mathcal{Z}$ does not grow over time and $z_t$ defines controlled Markov chain lets one to use standard methods to solve the optimal control problem. Ideally, the compression map (\ref{comp}) is  without loss of optimality. However, one might also work with lossy compression maps with controllable loss bounds, where the compressed state $z_t$ computationally appealing, which will be the main focus of this paper. 

In what follows, we will introduce different compression schemes, and present approximations with provable error bounds.

We start by introducing the most commonly used approach for the compression of the original information state, where the decision maker keeps track of the posterior distribution of the state $X_t$ given the available history $h_t$. In the following section, we formalize this approach.

\subsection{Reduction to Fully Observed Models Using Belief States}
It is by now a standard result that, for optimality analysis, every POMDP can be reduced to a completely observable Markov decision process (\cite{Yus76,Rhe74}), whose states are the posterior state distributions or {\it beliefs} of the observer or the filter process; that is, the state at time $t$ is
\begin{align}\label{belief_state}
z_t:=Pr\{X_{t} \in \,\cdot\, | Y_0,\ldots,Y_t, U_0, \ldots, U_{t-1}\} \in \P(\sX). 
\end{align}
We call this equivalent process the filter process \index{Belief-MDP}. The filter process lives in $\mathcal{Z} := \P(\sX)$ such that $\{z_t\}_t\subset \P(\mathds{X})$. Therefore, the state space of the new model can be viewed as $\mathcal{Z} = \P(\sX)$  and the action space remains as $\sU$. Here, $\mathcal{Z}$ is equipped with the Borel $\sigma$-algebra generated by the topology of weak convergence \cite{Bil99}. 
Then, the transition probability $\eta$ of the filter process can be constructed as follows. If we define the measurable function 
\[F(z,u,y) := Pr\{X_{t+1} \in \,\cdot\, | Z_t = z, U_t = u, Y_{t+1} = y\}\]
 from ${\cal P}(\mathds{X})\times\mathds{U}\times\mathds{Y}$ to ${\cal P}(\mathds{X})$ and use the stochastic kernel $P(\,\cdot\, | z,u) = \Pr\{Y_{t+1} \in \,\cdot\, | Z_t = z, U_t = u\}$ from ${\cal P}(\mathds{X})\times\mathds{U}$ to $\mathds{Y}$, we can write $\eta$ as
\begin{align}
\eta(\,\cdot\,|z,u) = \int_{\mathds{Y}} 1_{\{F(z,u,y) \in \,\cdot\,\}} P(dy|z,u). \label{beliefK}
\end{align}
The one-stage cost function $\tilde{c}:{\cal P}(\mathds{X}) \times \mathds{U}\rightarrow[0,\infty)$ of the filter process is given by 
\begin{align}\label{belief_cost}
\tilde{c}(z,u) := \int_{\sX} c(x,u) z(dx),
\end{align}
which is a Borel measurable function. Hence, the filter process is a completely observable Markov process with the components $(\mathcal{Z},\sU,\tilde{c},\eta)$.


It is well known that an optimal control policy of the original POMDP can use the belief $Z_t$ as a sufficient statistic for optimal policies (see \cite{Yus76}, \cite{Rhe74}), provided they exist. More precisely, the filter process is equivalent to the original POMDP  in the sense that for any optimal policy for the filter process, one can construct a policy for the original POMDP which is optimal. Existence then follows under measurable selection conditions, e.g. satisfied by the aforementioned weak Feller continuity of the belief MDP (see \cite{HernandezLermaMCP}). In particular, this applies when the belief-MDP is weak Feller (\cite{FeKaZa12, KSYWeakFellerSysCont}), the action spaces are compact and the cost function is continuous and bounded.

Even though, the belief MDP approach provides a strong tool for the analysis of POMDPs,  solution of POMDPs may not be computationally feasible in general. This stems from mainly two reasons:
\begin{itemize}
\item Computation of the belief state $h_t\to z_t=P(X_t\cdot|h_t)$ is not tractable in general, except for a few special cases, e.g. Kalman filter for linear and Gaussian additive noise dynamics, or for finite models.
\item  The belief space $\mathcal{Z}=\P(\mathds{X})$ is always uncountable even when $\mathds{X}$, $\mathds{Y}$
 and  $\mathds{U}$ are finite. Thus, even one is able to track and compute the belief state, the solution of the control may not be practical. 
\end{itemize}

Therefore, approximation of the belief-MDP is usually needed.

\subsection{Discretization of Action Sets}

For a weak Feller belief MDP (\cite{FeKaZa12, KSYWeakFellerSysCont}), \cite[Theorem 3.16]{SaLiYuSpringer} (see also \cite{SYL2016near}) has established near optimality of finite action policies. If $\mathds{U}$ is compact, a finite collection of action sets can be constructed, with arbitrary approximation error. Accordingly, we will assume that the action spaces are finite in the following.

\section{Approximation via Quantization of the (Hidden) State Space}\label{hidden_state_disc} The first approach we present builds on the discretization of the hidden state space $\mathds{X}$. This methods will rely on the model knowledge.  

We first focus on the discretization of the observation variables. For the discretization, we choose disjoint subsets $\{A_i\}_{i=0}^M\subset\mathds{X}$ such that $\cup_iA_i=\mathds{X}$. We define a finite set 
\begin{align*}
\mathds{X}_M:=\{x_0,\dots,x_M\}.
\end{align*}
Furthermore, the discretization map $\psi_{\mathds{X}}:\mathds{X}\to \hat{\mathds{X}}_M$ is defined such that 
\begin{align}\label{quant_map}
\psi_{\mathds{X}}(x)=x_i, \text{ if } x\in A_i.
\end{align}
In other words, the discretization checks what bin or set $x$ belongs to and maps it to the representative element of that bin. 

Using this discretization scheme, we introduce appropriately normalized transition and channel kernels $\hat{\mathcal{T}}$ and $\hat{O}$, following and generalizing  \cite{SaYuLi15c}\cite[Chapter 4]{SaLiYuSpringer}: We first choose a weighting measure $\pi(\cdot) \in \P(\mathds{X})$. For some $x\in A_i$, we define
\begin{align}\label{disc_hidden_model}
&\hat{\mathcal{T}}(\cdot|x,u) = \int_{A_i} \mathcal{T}(\cdot|z,u)\pi_i(dz)\nonumber\\
&\hat{O}(\cdot|x) = \int_{A_i} O(\cdot|z)\pi_i(dz)
\end{align}
where $\pi_i(\cdot)=\frac{\pi(\cdot)}{\pi(A_i)}$. Note that the approximate channel density can be written similarly as
\begin{align*}
\hat{g}(x,y) = \int_{A_i} g(z,y)\pi_z(dz).
\end{align*}

We also define the following auxiliary cost function:
\begin{align*}
\hat{c}(x,u) =  \int c(z,u)\pi_i(dz).
\end{align*}
These approximate kernel and functions can be seen to defined either on the finite set $\hat{\mathds{X}}$ or on the original space $\mathds{X}$ by extending them as constant over the quantization bins.

We define the infinite horizon discounted cost for this approximation by
  \begin{align}\label{criterion1}
    \hat{J}_{\beta}(\mu,\hat{O},\gamma)= \sum_{t=0}^{\infty} \beta^t\hat{E}_\mu^{\hat{O},\gamma}\left[\hat{c}(X_t,U_t)\right]
  \end{align}
 \noindent for some discount factor $\beta \in (0,1)$, over the set of admissible policies $\gamma\in\Gamma$, where  $\hat{E}_\mu^{\hat{O},\gamma}$ denotes the expectation with initial state probability measure $\mu$ and the transition kernel for the discretized model $\hat{\mathcal{T}}$ and the channel $\hat{O}$ under policy $\gamma$.  We define the optimal cost for the discretized model by
\begin{align*}
  \hat{J}_{\beta}^*(\mu,{\hat{O}})&=\inf_{\gamma\in\Gamma} \hat{J}_{\beta}(\mu,\hat{O},\gamma).
\end{align*}

The above construction implies the following immediate result.
\begin{assumption}\label{channel_kernel_reg}
We assume that
\begin{itemize}
\item[a.] $|c(x,u) - c(x',u)|\leq K_c \|x-x'\|$ for some $K_c<\infty$ and $\|c\|_\infty <\infty$.
\item[b.] $W_1\left(\mathcal{T}(\cdot|x,u),\mathcal{T}(\cdot|z,u)\right)\leq K_T \|x-z\|$, for all $u \in \mathds{U}$. 
\item[c.] $\|O(\cdot|x)-O(\cdot|z)\|_{TV}\leq K_O \|x-z\|$, (or $\left|g(x,y) - g(z,y)\right|\leq K_O\|x-z\|$)
\end{itemize}
for all $x,z\in\mathds{X}$ for some $K_f<\infty$.
\end{assumption} 


\begin{lemma}\label{imm_lem}
Under Assumption \ref{channel_kernel_reg}, we have that 
\begin{align*}
&W_1\left(\mathcal{T}(\cdot|x,u), \hat{\mathcal{T}}(\cdot|x,u)\right)\leq K_T L_{\mathds{X}}\\
& \|O(\cdot|x) - \hat{O}(\cdot|x)\|_{TV} \leq K_O L_{\mathds{X}} ,\quad (\left|g(x,y) - \hat{g}(x,y)\right|\leq K_O L_{\mathds{X}})
\end{align*}
where \begin{align}
L_{\mathds{X}}=\max_{i\in \{1,\dots,M\}}\sup_{x,z'\in A_i}\|x-z\|.
\end{align}
\end{lemma}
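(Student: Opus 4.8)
The plan is to exploit the observation that each approximate kernel is, by its very construction in \eqref{disc_hidden_model}, a \emph{mixture} of the original kernels indexed over the quantization bin that contains $x$. Fix $x\in A_i$ and recall that $\pi_i$ is a probability measure supported on $A_i$, so that $\int_{A_i}\pi_i(dz)=1$. This lets me rewrite the reference measure trivially as $\mathcal{T}(\cdot|x,u)=\int_{A_i}\mathcal{T}(\cdot|x,u)\,\pi_i(dz)$, so the two measures being compared differ only through the first argument $x$ versus $z$ appearing inside one common integral over $A_i$. The whole proof then reduces to pushing the relevant metric inside this average and applying the Lipschitz bounds of Assumption \ref{channel_kernel_reg}, using that $x,z\in A_i$ forces $\|x-z\|\leq L_{\mathds{X}}$.

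For the Wasserstein estimate I would invoke Kantorovich--Rubinstein duality, $W_1(\nu,\nu')=\sup_{\Lip(f)\leq 1}\bigl(\int f\,d\nu-\int f\,d\nu'\bigr)$. For any $1$-Lipschitz $f$,
\begin{align*}
\int f\,d\mathcal{T}(\cdot|x,u)-\int f\,d\hat{\mathcal{T}}(\cdot|x,u)
&=\int_{A_i}\left(\int f\,d\mathcal{T}(\cdot|x,u)-\int f\,d\mathcal{T}(\cdot|z,u)\right)\pi_i(dz)\\
&\leq\int_{A_i}W_1\bigl(\mathcal{T}(\cdot|x,u),\mathcal{T}(\cdot|z,u)\bigr)\,\pi_i(dz),
\end{align*}
and taking the supremum over $f$ transfers the bound to $W_1(\mathcal{T}(\cdot|x,u),\hat{\mathcal{T}}(\cdot|x,u))$. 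Applying Assumption \ref{channel_kernel_reg}(b) and $\|x-z\|\leq L_{\mathds{X}}$ gives $W_1(\mathcal{T}(\cdot|x,u),\hat{\mathcal{T}}(\cdot|x,u))\leq K_T L_{\mathds{X}}$. (Equivalently, one may skip duality and cite joint convexity of $W_1$ in its arguments.) The total-variation and density bounds follow by the identical averaging, but even more directly, since TV satisfies the triangle inequality: writing $O(\cdot|x)-\hat{O}(\cdot|x)=\int_{A_i}\bigl(O(\cdot|x)-O(\cdot|z)\bigr)\pi_i(dz)$ and using convexity of the TV norm yields $\|O(\cdot|x)-\hat{O}(\cdot|x)\|_{TV}\leq\int_{A_i}\|O(\cdot|x)-O(\cdot|z)\|_{TV}\,\pi_i(dz)\leq K_O L_{\mathds{X}}$ by Assumption \ref{channel_kernel_reg}(c); the pointwise density version is the same computation applied to $|g(x,y)-g(z,y)|$ under the integral.

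I do not expect a genuine obstacle here, which is consistent with the lemma being flagged as an immediate consequence of the construction. The only step requiring a word of justification is the inequality showing that the metric ($W_1$, respectively TV) between a fixed measure and a mixture is dominated by the $\pi_i$-average of the pairwise distances; this is the standard convexity of these metrics, provable either through the dual representation used above or through an explicit coupling. Everything else is a direct substitution of the Lipschitz constants $K_T$, $K_O$ and the bin-diameter bound $L_{\mathds{X}}$.
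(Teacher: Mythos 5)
Your proof is correct and is exactly the argument the paper intends: the lemma is stated as an ``immediate result'' of the construction with no written proof, and the omitted argument is precisely the mixture representation $\hat{\mathcal{T}}(\cdot|x,u)=\int_{A_i}\mathcal{T}(\cdot|z,u)\pi_i(dz)$ combined with convexity of $W_1$ (via Kantorovich--Rubinstein duality) and of the total variation norm, together with the bin-diameter bound $\|x-z\|\leq L_{\mathds{X}}$ for $x,z\in A_i$. No gaps.
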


\begin{proof}
The proof can be found in Appendix \ref{imm_lem_proof}.
\end{proof}

The following result will be critical to prove the main result of this section:

\begin{lemma}\label{iter_lem}
Under Assumption \ref{channel_kernel_reg},  we have that
\begin{align*}
\left|E\left[c(X_t,\gamma(Y_{[0,t]}))\right] - \hat{E}\left[\hat{c}(X_t,\gamma(Y_{[0,t]}))\right]\right| \leq K_O\|c\|_\infty L_\mathds{X} \sum_{n=0}^t \sum_{m=0}^n K_T^m + K_c L_\mathds{X}\sum_{n=0}^t K_T^n
\end{align*}
for any policy $\gamma\in\Gamma$ where $E$ and $\hat{E}$ represent the expectation operators respectively under the models $(\mathcal{T}, O)$ and $(\hat{\mathcal{T}}, \hat{O})$.
\end{lemma}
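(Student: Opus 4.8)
The plan is to compare the two models one transition at a time, taking the \emph{original}-model cost-to-go as a test function that I propagate forward under the approximate model, while tracking how the Lipschitz regularity of this test function accumulates a factor $K_T$ at each backward step. First I would remove the cost mismatch: since $\hat c(x,u)=\int_{A_i}c(z,u)\pi_i(dz)$ for $x\in A_i$, Assumption \ref{channel_kernel_reg}(a) gives $|\hat c(x,u)-c(x,u)|\le K_cL_{\mathds X}$ pointwise, hence $|\hat E[\hat c(X_t,U_t)]-\hat E[c(X_t,U_t)]|\le K_cL_{\mathds X}$. It then remains to bound $|E[c(X_t,U_t)]-\hat E[c(X_t,U_t)]|$, the \emph{same} cost under the two models, where throughout $U_s=\gamma_s(Y_{[0,s]})$ is observation-measurable.

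For this I introduce the original cost-to-go $V_s(x_s,y_{[0,s]}):=E[c(X_t,U_t)\mid X_s=x_s,Y_{[0,s]}=y_{[0,s]}]$, which satisfies $V_t=c(\cdot,\gamma_t(\cdot))$, $\|V_s\|_\infty\le\|c\|_\infty\le1$, and the backward recursion $V_s=\Lambda_sV_{s+1}$, where $\Lambda_s$ integrates its argument against $O(dy_{s+1}|x_{s+1})\,\mathcal T(dx_{s+1}|x_s,\gamma_s(y_{[0,s]}))$ and $\hat\Lambda_s$ is the analogous operator built from $\hat O,\hat{\mathcal T}$. The key observation is that the only way $x_s$ enters $V_s$ is through $\mathcal T(\cdot|x_s,\gamma_s(y_{[0,s]}))$, because the policy reads the $y$'s and not $x_s$; combined with Assumption \ref{channel_kernel_reg}(b)--(c) this yields the Lipschitz recursion $\ell_s\le K_T(\ell_{s+1}+K_O)$ with $\ell_t\le K_c$, where $\ell_s:=\mathrm{Lip}_{x_s}(V_s)$, whose solution is $\ell_s\le K_T^{\,t-s}K_c+K_O\sum_{i=1}^{t-s}K_T^{\,i}$. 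I then set $H_s:=\hat E[V_s(X_s,Y_{[0,s]})]$, i.e.\ evolve under $(\hat{\mathcal T},\hat O)$ but evaluate with the original $V_s$; then $H_t=\hat E[c(X_t,U_t)]$, while $E[c(X_t,U_t)]-H_0$ is the time-$0$ channel swap, bounded by $\|V_0\|_\infty\,\sup_x\|O(\cdot|x)-\hat O(\cdot|x)\|_{TV}\le K_OL_{\mathds X}$ via Lemma \ref{imm_lem}. Telescoping gives $E[c]-\hat E[c]=(E[c]-H_0)+\sum_{s=0}^{t-1}(H_s-H_{s+1})$ with $H_s-H_{s+1}=\hat E[(\Lambda_s-\hat\Lambda_s)V_{s+1}]$.

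The remaining work is the one-step estimate on $(\Lambda_s-\hat\Lambda_s)V_{s+1}$. Writing $W_{s+1}(x_{s+1},y):=\int V_{s+1}\,O(dy_{s+1}|x_{s+1})$ for the value smoothed by the \emph{original} channel, and $\hat W_{s+1}$ for the same with $O$ replaced by $\hat O$, I add and subtract to obtain
\[(\Lambda_s-\hat\Lambda_s)V_{s+1}=\int W_{s+1}\,\bigl(\mathcal T-\hat{\mathcal T}\bigr)(dx_{s+1}|x_s,u_s)+\int(W_{s+1}-\hat W_{s+1})\,\hat{\mathcal T}(dx_{s+1}|x_s,u_s),\]
with $u_s=\gamma_s(y_{[0,s]})$. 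The first term is at most $\mathrm{Lip}_{x_{s+1}}(W_{s+1})\,W_1(\mathcal T,\hat{\mathcal T})\le(\ell_{s+1}+K_O)\,K_TL_{\mathds X}$, and the second is at most $\|V_{s+1}\|_\infty\,\|O-\hat O\|_{TV}\le K_OL_{\mathds X}$, both by Lemma \ref{imm_lem} and Assumption \ref{channel_kernel_reg}(c). Summing over $s$, adding the initial $K_OL_{\mathds X}$ and the cost term $K_cL_{\mathds X}$, and inserting the closed form of $\ell_{s+1}$, a direct summation shows the $K_c$ contributions collapse to $K_cL_{\mathds X}\sum_{n=0}^tK_T^{\,n}$ and the $K_O$ contributions to $K_OL_{\mathds X}\sum_{n=0}^t\sum_{m=0}^nK_T^{\,m}$, matching the claim exactly.

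I expect the main obstacle to be that the approximate kernels $\hat{\mathcal T}$ and $\hat O$ are piecewise constant across the bins $A_i$ and hence \emph{not} Lipschitz, so no modulus of continuity can be extracted from them. The resolution, and the only delicate point, is to orient each add-and-subtract so that the $W_1$ and $\mathrm{TV}$ regularity is always read off the original kernels $\mathcal T,O$ (Lipschitz by Assumption \ref{channel_kernel_reg}) while the hatted kernels appear only as integrators, and to confirm that $\mathrm{Lip}_{x_s}(V_s)$ propagates through $\mathcal T$ alone --- a consequence of the control being a function of the observations and not of the hidden state.
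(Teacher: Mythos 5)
Your proof is correct and is essentially the paper's argument: the same backward Lipschitz recursion $\ell_s \le K_T(\ell_{s+1}+K_O)$ for the original cost-to-go, and the same one-step decomposition pairing $W_1(\mathcal{T},\hat{\mathcal{T}})$ with the Lipschitz constant of the $O$-smoothed value and $\|O(\cdot|x)-\hat{O}(\cdot|x)\|_{TV}$ with the sup norm. The only difference is bookkeeping: you telescope via the hybrid quantity $H_s=\hat{E}[V_s]$ and peel off the $c$-versus-$\hat{c}$ mismatch up front, whereas the paper carries $\|F_{k-1}-\hat{F}_{k-1}\|_\infty$ inside a recursion whose base case absorbs that mismatch --- unrolled, the two give the identical sum.
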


\begin{proof}
The proof can be found in Appendix \ref{iter_lem_proof}. 
\end{proof}

\begin{prop}\label{key_prop}
Under Assumption \ref{channel_kernel_reg}, if we further assume  that $\beta K_T <1$, we can the write
\begin{align*}
J_\beta(\mu_0,O,\gamma) - \hat{J}_\beta (\mu_0,\hat{O},\gamma) \leq \frac{K_O\|c\|_\infty L_\mathds{X}}{(1-\beta)^2(1-\beta K_T)} + \frac{K_c L_\mathds{X}}{(1-\beta)(1-\beta K_T)}
\end{align*}
for any admissible policy $\gamma\in\Gamma$.
\end{prop}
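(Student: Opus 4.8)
```latex
\textbf{Proof proposal.}
The plan is to transfer the per-stage estimate of Lemma~\ref{iter_lem} into a bound on the discounted-cost difference, simply by weighting each term $t$ by $\beta^t$ and summing over $t$. First I would write, for a fixed admissible policy $\gamma$,
\begin{align*}
J_\beta(\mu_0,\gamma) - \hat{J}_\beta(\mu_0,\gamma)
= \sum_{t=0}^\infty \beta^t \left( E\left[c(X_t,\gamma(Y_{[0,t]}))\right] - \hat{E}\left[\hat{c}(X_t,\gamma(Y_{[0,t]}))\right] \right),
\end{align*}
and apply Lemma~\ref{iter_lem} termwise to bound the inner difference by $K_O L_\mathds{X} \sum_{n=0}^t\sum_{m=0}^n K_T^m + K_c L_\mathds{X}\sum_{n=0}^t K_T^n$. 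This reduces the claim to summing two explicit geometric-type double series against the weights $\beta^t$.

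Next I would evaluate the two resulting sums. For the cost term, $\sum_{t=0}^\infty \beta^t \sum_{n=0}^t K_T^n$ is, after interchanging the order of summation (justified since all terms are nonnegative), equal to $\sum_{n=0}^\infty K_T^n \sum_{t=n}^\infty \beta^t = \sum_{n=0}^\infty K_T^n \frac{\beta^n}{1-\beta} = \frac{1}{(1-\beta)(1-\beta K_T)}$, where convergence of the last geometric series uses exactly the hypothesis $\beta K_T < 1$. This yields the second term $\frac{K_c L_\mathds{X}}{(1-\beta)(1-\beta K_T)}$. For the channel term I would handle the triple sum $\sum_{t=0}^\infty \beta^t \sum_{n=0}^t \sum_{m=0}^n K_T^m$ the same way: swapping summation order twice (or equivalently recognizing the inner double sum $\sum_{n=0}^t A_n$ as a partial sum and summing geometric series) produces an extra factor of $\frac{1}{1-\beta}$ relative to the cost term, giving $\frac{1}{(1-\beta)^2(1-\beta K_T)}$ and hence the first term $\frac{K_O L_\mathds{X}}{(1-\beta)^2(1-\beta K_T)}$.

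The only genuinely delicate point — and the step I would flag as the main obstacle — is bookkeeping for the triple-summation interchange: one must track the three indices $0\le m\le n\le t$ carefully so that the combined factor comes out to exactly $\frac{1}{(1-\beta)^2(1-\beta K_T)}$ rather than an off-by-one variant. Concretely, interchanging gives $\sum_{m=0}^\infty K_T^m \sum_{n=m}^\infty \sum_{t=n}^\infty \beta^t = \sum_{m=0}^\infty K_T^m \sum_{n=m}^\infty \frac{\beta^n}{1-\beta} = \frac{1}{1-\beta}\sum_{m=0}^\infty K_T^m \frac{\beta^m}{1-\beta} = \frac{1}{(1-\beta)^2(1-\beta K_T)}$, confirming the claimed constant. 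Everything is nonnegative, so Tonelli justifies all the rearrangements and no convergence subtlety arises beyond $\beta K_T<1$; the result then follows by combining the two summed bounds.
```
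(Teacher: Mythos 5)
Your proposal is correct and follows essentially the same route as the paper: the paper's proof simply writes $J_\beta$ and $\hat{J}_\beta$ as discounted sums of the stage-wise expectations and declares the bound a "direct application" of Lemma~\ref{iter_lem}, leaving the summation arithmetic implicit. You carry out exactly that arithmetic — the Tonelli interchanges over $0\le m\le n\le t$ yielding $\frac{1}{(1-\beta)(1-\beta K_T)}$ and $\frac{1}{(1-\beta)^2(1-\beta K_T)}$ are correct — so your write-up is a more explicit version of the paper's argument with no gap.
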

\begin{proof}
Note that we have
\begin{align*}
&J_\beta(\mu_0,O,\gamma) = \sum_{t=0}^\infty \beta^t E_{\mu_0}^\gamma\left[c(X_t,\gamma(Y_{[0,t]}))\right]\\
&\hat{J}_\beta(\mu_0,\hat{O},\gamma) = \sum_{t=0}^\infty \beta^t \hat{E}_{\mu_0}^\gamma\left[\hat{c}(X_t,\gamma(Y_{[0,t]}))\right]
\end{align*}
Hence the result is a direct application of Lemma \ref{iter_lem}.
\end{proof}

\subsection{Near Optimal Policy Construction Based on Finite State Space}

Recall that the control of the partially observed system can be reduced to the control of a belief MDP. For the belief state based control, the decision maker calculates and applies the optimal policy $\gamma^*(\cdot):\P(\mathds{X})\to \mathds{U}$, by tracking the belief state
 \[\pi_t=Pr(X_t\in\cdot| Y_{[0,t]},U_{[0,t]} ).\]
As a result of discretization of the hidden state space, the agent can construct the conditional probabilities on the finite state space $\hat{\mathds{X}}$, using the observations. At time $t$, the belief state on $\hat{\mathds{X}}$ can be written as
\begin{align}\label{hmm_finite}
 \hat{\pi}_t (i)&= Pr (\hat{X}_t =i| y_{[0,t]},u_{[0,t-1]}) \nonumber\\
&= \frac{\sum_{\hat{x}_{t-1}} \hat{g}(i,y_{t}) \hat{\mathcal{T}} (i|\hat{x}_{t-1},u_{t-1}) \hat{\pi}_{t-1}(\hat{x}_{t-1}) }{\sum_{\hat{x}_{t-1}} \sum_{\hat{x}_t}\hat{g}(\hat{x}_t,y_{t}) \hat{\mathcal{T}} (\hat{x}_t|\hat{x}_{t-1},u_{t-1}) \hat{\pi}_{t-1}(\hat{x}_{t-1})}\nonumber\\
&=: G(\hat{\pi}_{t-1},y_t,u_{t-1})(i)
\end{align}
and thus the decision maker can track the belief state via these iterations. Furthermore, the control policy can be solved for the belief state on the simplex $\P(\mathds{\hat{X}}) \subset \mathds{R}^{|\mathds{\hat{X}}|}$ using the belief MDP that corresponds to the transition and channel model $\hat{\mathcal{T}}$, and $\hat{O}$ (or the channel density $\hat{g}(\hat{x},y)$). The resulting policy $\hat{\gamma}:\P(\hat{\mathds{X}}) \to \mathds{U}$ can also be realized as a mapping from $h_t=\{y_{[0,t]},u_{[0,t-1]}\}$ to $\mathds{U}$ since the belief at any time $t$ is also a function of $h_t$, i.e. 
\begin{align*}
\hat{\gamma}(\hat{\pi}_t) =\hat{\gamma}(Pr(\hat{X}_t\in\cdot|y_{[0,t]},u_{[0,t-1]})).
\end{align*}
One can then directly apply Proposition \ref{key_prop} to find an upper-bound for the performance loss of the approximate control policy:
\begin{theorem}\label{rate_conv}
Under Assumption \ref{channel_kernel_reg}, if we further assume that $\beta K_T <1$, we can the write
\begin{align*}
J_\beta(\mu_0,O,\hat{\gamma}) - J_\beta^*(\mu_0,O)\leq \frac{2K_O}{(1-\beta)^2(1-\beta K_T)} L_{\mathds{X}}+ \frac{2K_c }{(1-\beta)(1-\beta K_T)}L_\mathds{X}
\end{align*}
where $\hat{\gamma}$ is the optimal policy for the control problem based on the discretized hidden state space $\hat{\mathds{X}}$. Note that the first term above represent the realized accumulated cost if one applies the policy based on the the finite space for the true system, whereas the second term is the optimal cost that can be achieved for the correct model under admissible policies.
\end{theorem}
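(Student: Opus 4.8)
The plan is to prove this by a standard sandwich (triangle-inequality) argument that uses Proposition \ref{key_prop} in \emph{both} directions. The first thing I would record is that, although Proposition \ref{key_prop} is written with a single sign, its proof passes through Lemma \ref{iter_lem}, which controls the \emph{absolute} value of the per-stage cost difference. Consequently, for every admissible $\gamma\in\Gamma$ one actually has the two-sided estimate
\[
\left| J_\beta(\mu_0,\gamma) - \hat{J}_\beta(\mu_0,\gamma)\right| \leq \Delta, \qquad \Delta := \frac{K_O L_\mathds{X}}{(1-\beta)^2(1-\beta K_T)} + \frac{K_c L_\mathds{X}}{(1-\beta)(1-\beta K_T)},
\]
and the target bound is precisely $2\Delta$. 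Both inequalities contained in this absolute-value bound will be needed below.

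Next I would write $\hat{J}_\beta^*(\mu_0) := \inf_{\gamma\in\Gamma}\hat{J}_\beta(\mu_0,\gamma)$ for the optimal cost under the approximate model $(\hat{\mathcal{T}},\hat{O},\hat{c})$. By construction $\hat{\gamma}$ is optimal for this approximate model and, as explained in the discussion preceding the theorem, it is realizable as a function of $I_t=\{y_{[0,t]},u_{[0,t-1]}\}$ (via the belief iteration \eqref{hmm_finite} on the simplex $\P(\hat{\mathds{X}})$), hence admissible, with $\hat{J}_\beta(\mu_0,\hat{\gamma}) = \hat{J}_\beta^*(\mu_0)$. I would then telescope
\begin{align*}
J_\beta(\mu_0,\hat{\gamma}) - J_\beta^*(\mu_0) = \Big[ J_\beta(\mu_0,\hat{\gamma}) - \hat{J}_\beta(\mu_0,\hat{\gamma}) \Big] + \Big[ \hat{J}_\beta^*(\mu_0) - J_\beta^*(\mu_0) \Big].
\end{align*}
The first bracket is at most $\Delta$ by applying the two-sided bound directly with $\gamma=\hat{\gamma}$.

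For the second bracket I would fix a true optimal (or, if existence is in doubt, an $\epsilon$-optimal) policy $\gamma^*$ with $J_\beta(\mu_0,\gamma^*)\le J_\beta^*(\mu_0)+\epsilon$, and estimate
\begin{align*}
\hat{J}_\beta^*(\mu_0) - J_\beta^*(\mu_0) \leq \hat{J}_\beta(\mu_0,\gamma^*) - J_\beta(\mu_0,\gamma^*) + \epsilon \leq \Delta + \epsilon,
\end{align*}
where the first step combines $\hat{J}_\beta^*(\mu_0)\le \hat{J}_\beta(\mu_0,\gamma^*)$ (optimality of $\hat\gamma$ for the approximate model) with $J_\beta^*(\mu_0)\ge J_\beta(\mu_0,\gamma^*)-\epsilon$, and the second step uses the other direction of the two-sided bound. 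Letting $\epsilon\downarrow 0$ and adding the two brackets gives $J_\beta(\mu_0,\hat{\gamma}) - J_\beta^*(\mu_0)\le 2\Delta$, exactly the claimed inequality.

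I do not expect a genuine obstacle here: the estimate is a short consequence of Proposition \ref{key_prop}. The only points requiring care are (i) justifying the two-sided reading of Proposition \ref{key_prop} from the absolute value in Lemma \ref{iter_lem}, and (ii) confirming that $\hat{\gamma}$, built on the discretized belief simplex, is admissible for the original POMDP and attains the approximate optimum — both supplied by the belief-MDP reduction of the finite model and the preceding discussion. The mild measurable-selection/existence question for $\gamma^*$ is sidestepped cleanly by the $\epsilon$-optimal device above.
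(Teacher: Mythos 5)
Your proof is correct and follows essentially the same route as the paper's: both decompose $J_\beta(\mu_0,\hat{\gamma}) - J_\beta^*(\mu_0)$ into the model-mismatch term for the fixed policy $\hat{\gamma}$ plus the difference of the two optimal values, and both bound each piece by the two-sided reading of Proposition \ref{key_prop}. Your version is marginally tidier — the exact telescoping identity avoids the paper's sign case-split on $\hat{J}^*_\beta(\mu_0) - J_\beta^*(\mu_0)$, and the $\epsilon$-optimal policy device sidesteps the existence of $\gamma^*$ which the paper takes for granted — but these are refinements of the same argument, not a different one.
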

\begin{proof}
We write
\begin{align*}
J_\beta(\mu_0,O,\hat{\gamma}) - J_\beta^*(\mu_0,O) \leq \left|J_\beta(\mu_0,O,\hat{\gamma}) - \hat{J}^*_\beta(\mu_0,\hat{O})\right| + \left| \hat{J}^*_\beta(\mu_0,\hat{O}) -  J_\beta^*(\mu_0) \right|.
\end{align*}
The first term is bounded by Proposition \ref{key_prop} as both costs are induced by the same policy $\hat{\gamma}$. For the second term, we can also use Proposition \ref{key_prop} with the following extra step: if $\hat{J}^*_\beta(\mu_0,\hat{O}) >  J_\beta^*(\mu_0,O) $
\begin{align*}
\left| \hat{J}^*_\beta(\mu_0,\hat{O}) -  J_\beta^*(\mu_0,O) \right|\leq \hat{J}_\beta(\mu_0,\hat{O},\gamma^*) -  J_\beta^*(\mu_0,O) 
\end{align*}
if $\hat{J}^*_\beta(\mu_0,\hat{O}) \leq   J_\beta^*(\mu_0,O) $ we can then write
\begin{align*}
\left| \hat{J}^*_\beta(\mu_0,\hat{O}) -  J_\beta^*(\mu_0,O) \right|\leq  J_\beta(\mu_0,O,\hat{\gamma}) - \hat{J}^*_\beta(\mu_0,\hat{O}) 
\end{align*}
where we used the fact that $J_\beta^*(\mu_0,O)=J_\beta(\mu_0,O,\gamma^*)$ and $\hat{J}_\beta^*(\mu_0,\hat{O})=\hat{J}_\beta(\mu_0,\hat{O},\hat{\gamma})$, and the fact that $\hat{\gamma}$ and $\gamma^*$ are optimal for the corresponding models, so using any other policy increases the cost.  Hence, in either case, we can use Proposition \ref{key_prop} which concludes the proof. 
\end{proof}

\subsection{Asymptotic Analysis}
In this section, we analyze the performance of the hidden state discretization method asymptotically. For this part, rather than choosing an arbitrary quantization scheme, we will work with a particularly constructed quantization scheme. We assume here that $\mathds{X}$ is compact, and for some $n\in\mathds{N}$, we select a finite set (whose existence is guaranteed under the compactness assumption) $\hat{\mathds{X}}_n =\{x_{n,1},\dots,x_{n,k_n}\}$ such that 
\begin{align*}
\min_{1,\dots,k_n} \|x-x_{n,i}\|<1/n, \text{ for all } x\in\mathds{X}.
\end{align*}
That is, $\mathds{\hat{X}}_n$ is a $1/n$ net in $\mathds{X}$. We choose the construction sets $A_i$'s, in a nearest neighbor way such that 
\begin{align*}
\max_{i}\sup_{x,x'\in A_i}\|x-x'\|<1/n.
\end{align*}
We will construct the approximate transition and channel kernels in the same way as (\ref{channel_kernel_reg}). But, we will denote them by $\hat{\mathcal{T}}_n$ and $\hat{O}_n$ to emphasize their dependence on $n\in\mathds{N}$.

Furthermore, instead of using Assumption \ref{channel_kernel_reg} with Lipschitz continuity we will assume the following:
\begin{assumption}\label{channel_kernel_cont}
We assume that
\begin{itemize}
\item[i.] $c(x,u)$ is continuous in $(x,u)$.
\item[ii.] $\mathcal{T}(\cdot|x,u)$ is weakly continuous in $(x,u)$.
\item[iii.] The channel $O(\cdot|x)$ is total variation continuous in $x$. 
\end{itemize}
\end{assumption} 
\begin{remark}
Under the density assumption on the channel $O(\cdot|x)$, we will sometimes equivalently use the assumption that $g(x,y)$ is continuous in $x$.
\end{remark}
The following then is a direct implication of Assumption \ref{channel_kernel_cont}, see also \cite[Theorem 16]{kara2021chapter}.
\begin{lemma}\label{imm_lem2}
Let $\mathds{U}$ be finite. Recall that $\mathds{U}$ is assumed finite, with arbitrarily small loss. Under Assumption \ref{channel_kernel_cont}, we have that
\begin{align*}
\hat{\mathcal{T}}_n(\cdot|x_n,u)\to \mathcal{T}(\cdot|x,u)
\end{align*}
weakly for every $x_n\to x$, and for every $u\in\mathds{U}$. Furthermore
\begin{align*}
\|\hat{O}_n(\cdot|x_n)-  O(\cdot|x)\|_{TV} \to 0
\end{align*}
for every $x_n\to x$, and for every $u\in\mathds{U}$.
\end{lemma}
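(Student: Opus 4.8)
The plan is to reduce both convergences to a single observation: the probability measures $\pi_{i(n)}$ defining $\hat{\mathcal{T}}_n$ and $\hat{O}_n$ concentrate at the limit point $x$. Fix $x_n\to x$ and $u\in\mathds{U}$, and for each $n$ let $A_{i(n)}$ denote the cell of the $n$-th quantization containing $x_n$, so that $\hat{\mathcal{T}}_n(\cdot|x_n,u)=\int_{A_{i(n)}}\mathcal{T}(\cdot|z,u)\,\pi_{i(n)}(dz)$ and $\hat{O}_n(\cdot|x_n)=\int_{A_{i(n)}}O(\cdot|z)\,\pi_{i(n)}(dz)$, where $\pi_{i(n)}$ is the normalized restriction of $\pi$ to $A_{i(n)}$. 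The first step I would establish is that every $z$ in the support of $\pi_{i(n)}$ collapses to $x$: since the nearest-neighbor construction forces $\sup_{z,z'\in A_{i(n)}}\|z-z'\|<1/n$ and $x_n\in A_{i(n)}$, the triangle inequality gives $\|z-x\|\le\|z-x_n\|+\|x_n-x\|<1/n+\|x_n-x\|=:\delta_n\to0$. Hence for every $\delta>0$ the entire support of $\pi_{i(n)}$ eventually lies in the ball $B(x,\delta)$.

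For the weak convergence of $\hat{\mathcal{T}}_n$ I would fix a bounded continuous test function $h$ and set $g(z):=\int h(w)\,\mathcal{T}(dw|z,u)$, which is continuous at $x$ by the weak continuity assumption (Assumption \ref{channel_kernel_cont}(ii), using that $\mathds{U}$ is discrete so $u$ is held fixed). Fubini then gives $\int h\,d\hat{\mathcal{T}}_n(\cdot|x_n,u)=\int_{A_{i(n)}}g(z)\,\pi_{i(n)}(dz)$, and since $\pi_{i(n)}$ is a probability measure supported in $B(x,\delta_n)$,
$$\left|\int h\,d\hat{\mathcal{T}}_n(\cdot|x_n,u)-g(x)\right|\le\int_{A_{i(n)}}|g(z)-g(x)|\,\pi_{i(n)}(dz)\le\sup_{\|z-x\|<\delta_n}|g(z)-g(x)|,$$
which tends to $0$ by continuity of $g$ at $x$. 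As $h$ was arbitrary, this yields $\hat{\mathcal{T}}_n(\cdot|x_n,u)\to\mathcal{T}(\cdot|x,u)$ weakly.

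For the total variation claim I would invoke convexity of the total variation norm, namely $\|\int\nu_z\,\pi_{i(n)}(dz)\|_{TV}\le\int\|\nu_z\|_{TV}\,\pi_{i(n)}(dz)$ for a probability measure $\pi_{i(n)}$ and signed measures $\nu_z$, applied to $\nu_z=O(\cdot|z)-O(\cdot|x)$. This gives
$$\|\hat{O}_n(\cdot|x_n)-O(\cdot|x)\|_{TV}\le\int_{A_{i(n)}}\|O(\cdot|z)-O(\cdot|x)\|_{TV}\,\pi_{i(n)}(dz)\le\sup_{\|z-x\|<\delta_n}\|O(\cdot|z)-O(\cdot|x)\|_{TV},$$
and the total variation continuity of the channel (Assumption \ref{channel_kernel_cont}(iii)) sends the right-hand side to $0$.

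The argument is fundamentally a concentration-plus-continuity estimate, so I do not expect a deep obstacle; the care is concentrated in two routine but essential places. First, the reduction in the opening paragraph genuinely needs the nearest-neighbor $1/n$-net construction: it is what guarantees the active cells $A_{i(n)}$ have vanishing diameter and, together with $x_n\to x$, collapse to $x$. Second, the total variation step rests on the convexity inequality for $\|\cdot\|_{TV}$, which I would justify from its dual form $\|\nu\|_{TV}=\sup_{\|\phi\|_\infty\le1}\int\phi\,d\nu$, while the weak-convergence step requires interchanging the test-function pairing with the averaging integral via Fubini. A minor technical point to guard against is that each active cell should have positive $\pi$-mass so that $\pi_{i(n)}$ is well defined, which is implicit in the construction.
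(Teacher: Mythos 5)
Your proof is correct and is exactly the argument the paper has in mind: the paper states this lemma as a ``direct implication'' of Assumption \ref{channel_kernel_cont} (deferring details to a cited reference), and your concentration-plus-continuity write-up --- vanishing cell diameter from the $1/n$-net forces the averaging measures $\pi_{i(n)}$ to collapse onto $x$, after which weak continuity of $\mathcal{T}$ and total variation continuity of $O$ do the rest --- is the standard way to fill that in. Your two flagged technical points (the convexity inequality for $\|\cdot\|_{TV}$ via its dual form, and the implicit requirement $\pi(A_{i(n)})>0$ for the normalized measures to be well defined) are both apt and handled correctly.
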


\begin{theorem}
Under Assumption \ref{channel_kernel_cont}, we can then write
\begin{align*}
J_\beta(\mu_0,O,\hat{\gamma}_n) \to J_\beta^*(\mu_0,O)
\end{align*}
where $\hat{\gamma}_n$ is the optimal policy for the control problem based on the discretized hidden state space $\hat{\mathds{X}}_n$ and models $\hat{\mathcal{T}}_n$ and $\hat{O}_n$. 
\end{theorem}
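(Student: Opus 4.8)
The plan is to reprise the three-step architecture used for the Lipschitz case (Lemma~\ref{iter_lem}, Proposition~\ref{key_prop}, Theorem~\ref{rate_conv}), but to replace every explicit Lipschitz estimate by a qualitative continuity/compactness argument. First I would reduce the claim, exactly as in the proof of Theorem~\ref{rate_conv}, to the uniform-over-policy statement $\sup_{\gamma\in\Gamma}\left|J_\beta(\mu_0,\gamma)-\hat{J}_\beta^n(\mu_0,\gamma)\right|\to 0$, where $\hat{J}_\beta^n$ is the cost under the model $(\hat{\mathcal{T}}_n,\hat{O}_n)$. This uniformity is genuinely needed: the two terms produced by the triangle inequality of Theorem~\ref{rate_conv} involve the fixed optimal policy $\gamma^*$ and the $n$-dependent policy $\hat{\gamma}_n$, and controlling the latter forces a bound valid over all $\gamma$. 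Since $\|c\|_\infty\le C$ and $\beta\in(0,1)$, the discounted tail $\sum_{t>N}\beta^t\,2C$ is uniformly small in $\gamma$ and $n$, so it suffices to show that each finite-horizon stage difference $\left|E_{\mu_0}^\gamma[c(X_t,\gamma(Y_{[0,t]}))]-\hat{E}_{\mu_0}^\gamma[\hat{c}_n(X_t,\gamma(Y_{[0,t]}))]\right|$ tends to $0$ as $n\to\infty$, uniformly over $\gamma\in\Gamma$, for each fixed $t$.

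For that I would run the backward recursion on the functions $F_k,\hat{F}_k^n$ from Lemma~\ref{iter_lem}. Two preliminary upgrades of Lemma~\ref{imm_lem2}, both from compactness of $\mathds{X}$, are required. Since $x\mapsto O(\cdot|x)$ is total-variation continuous on the compact set $\mathds{X}$ it is uniformly continuous, and as each quantization bin has diameter below $1/n$ we obtain the uniform estimate $\sup_{x}\|O(\cdot|x)-\hat{O}_n(\cdot|x)\|_{TV}\to 0$; likewise $\sup_{x,u}|c(x,u)-\hat{c}_n(x,u)|\to 0$. For the transition kernels, a subsequence-plus-compactness argument turns the continuous-convergence statement of Lemma~\ref{imm_lem2} into $\sup_{x}\big|\int h\,d\mathcal{T}(\cdot|x,u)-\int h\,d\hat{\mathcal{T}}_n(\cdot|x,u)\big|\to 0$ for every \emph{fixed} continuous bounded $h$ and every $u\in\mathds{U}$.

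The crux, and the step I expect to be the main obstacle, is passing from ``fixed $h$'' to the uniform-over-$\gamma$ bound: in the recursion the inner integrand $h_y(\cdot):=\int F_{k-1}(\cdot,y_{[0,t-k]},y')\,O(dy'|\cdot)$ depends on the conditioning history $y_{[0,t-k]}$, and through $\gamma$ on an arbitrary action assignment, so it ranges over an infinite family of test functions rather than a single one. I would resolve this by proving, by induction on $k$, that the family $\{F_k(\cdot,y)\}$ — indexed over all histories $y$ and all policies — is uniformly bounded and \emph{equicontinuous} in its state argument. The base case is the finite, hence equicontinuous, family $\{c(\cdot,u):u\in\mathds{U}\}$ (here finiteness of $\mathds{U}$ and uniform continuity of $c$ on compact $\mathds{X}$ are essential); the inductive step combines uniform total-variation continuity of $O$ with the uniform weak-continuity modulus of $(x,u)\mapsto\mathcal{T}(\cdot|x,u)$ on $\mathds{X}\times\mathds{U}$, after approximating the equicontinuous family $\{h_y\}$ uniformly by Lipschitz functions (e.g.\ by inf-convolution, whose error is controlled by the common modulus of continuity). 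Equicontinuity then lets me invoke Arzel\`a--Ascoli: $\{h_y\}$ is totally bounded in $C(\mathds{X})$, so a finite sup-norm net reduces the transition-difference term to the fixed-$h$ convergence already established, the residual error $2\epsilon$ arising from the net together with the fact that $\mathcal{T},\hat{\mathcal{T}}_n$ are probability measures.

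Putting these together, the recursion takes the form $\|F_k-\hat{F}_k^n\|_\infty\le \alpha_k^n+\|F_{k-1}-\hat{F}_{k-1}^n\|_\infty$ with $\alpha_k^n\to 0$ uniformly in $\gamma$ as $n\to\infty$, where $\alpha_k^n$ collects the uniform transition term, the term $C\sup_x\|O(\cdot|x)-\hat{O}_n(\cdot|x)\|_{TV}$, and, at $k=0$, $\sup_{x,u}|c-\hat{c}_n|$. Summing the finitely many vanishing increments gives $\|F_t-\hat{F}_t^n\|_\infty\to 0$ for each fixed $t$, hence the uniform-over-$\gamma$ stage convergence; reinstating the discounted sum and the tail bound yields $\sup_{\gamma}|J_\beta(\mu_0,\gamma)-\hat{J}_\beta^n(\mu_0,\gamma)|\to 0$. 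Feeding this into the triangle-inequality bookkeeping of Theorem~\ref{rate_conv} gives $J_\beta(\mu_0,\hat{\gamma}_n)\to J_\beta^*(\mu_0)$.
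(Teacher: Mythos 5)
Your argument is correct and follows exactly the route the paper indicates: the paper's own proof is a one-sentence deferral to the structure of Theorem~\ref{rate_conv} (and to the robustness result it cites), i.e.\ redo the recursion of Lemma~\ref{iter_lem} with the Lipschitz moduli replaced by uniform-continuity moduli obtained from compactness of $\mathds{X}$. You have in fact supplied the one genuinely nontrivial ingredient that this deferral leaves implicit --- the uniformity of the transition-kernel error over the infinite family of test functions $h_y$, which you correctly obtain by proving equicontinuity of $\{F_k(\cdot,y)\}$ by induction (using finiteness of $\mathds{U}$ for the base case and total-variation continuity of $O$ for the inductive step) and then passing to a finite net via Arzel\`a--Ascoli.
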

\begin{proof}
The proof follows similar steps as in the proof of Theorem \ref{rate_conv}, also see \cite[Theorem 4.4]{kara2020robustness} where only transition discrepancy is considered.
\end{proof}

\subsection{A Discussion on the Hidden Space Discretization}
One can construct a belief MDP model based on the discretization of the hidden state space, and using the transition and channel models introduced in (\ref{disc_hidden_model}). This approximation results in a fully observed control problem where the state space  is the simplex on $\hat{\mathds{X}}$ where $|\hat{\mathds{X}}|=M$.  

As a result of this approximation, the belief state on the finite space can be computed iteratively using (\ref{hmm_finite}). Although, the resulting state space is continuous even after the discretization of the hidden space, one can apply further quantization directly on the resulting simplex to find near optimal controllers defined on a finite space; (see \cite[Section 5]{saldi2019pomdp}). Notably, observe that \cite[Theorem 2.6]{kreitmeier2011optimal} relates the Wasserstein error with the quantization diameter.

We also note that the hidden state space discretization indirectly approximates the belief space $\P(\mathds{X})$ as well. In particular, for the finite space $\hat{\mathds{X}}=\{x_1,\dots,x_M\}$, all the probability measures that assign the same measure to the quantization bins, $B_i$'s, form an equivalence class after the discretization of the hidden space. That is if $P,Q\in\P(\mathds{X})$ are such that $P(B_i)=Q(B_i)$ for all $i\in\{1,\dots,M\}$ then we group $P$ and $Q$ together (and all the other measures that have the same property). Hence, the methods in \cite{SaLiYuSpringer,SaYuLi15c} and the Wasserstein and weak continuity of the belief kernel (\cite{KSYWeakFellerSysCont,demirci2023average,demirciRefined2023}) as an alternative approach to establish near optimality.

However, we should recall that this approximation relies on the knowledge of the model. In particular, the belief state on the finite set $\hat{\mathds{X}}$ is computed with the model knowledge. Hence, combining this approximation method with learning methods may not be possible if one only has access to the noisy  observation, $y_{[0,t]}$, and control action, $u_{[0,t]}$, variables for learning period, the estimation of the kernels $\mathcal{T}(\cdot|x,u)$ and $O(\cdot|x)$  is possible if one has access to the hidden state process as well.

Hence, in what follows, we will focus on alternative compression schemes that uses finite memory information variables.


\section{Finite Memory Based Compression}\label{FiniteMemoryCompSec}
In this section, we will focus on compression schemes using the finite memory information variables. We note that unlike the methods in Section \ref{hidden_state_disc}, we keep the hidden state $\mathds{X}$ as it is, and do not discretize it for the techniques used in this section.
\subsection{An Alternative Finite Window Belief-MDP Reduction}\label{alt_bel_model}
The construction in this section will build on \cite{kara2021convergence}.
 The belief MDP reduction reveals that for the optimal performance, belief state is a sufficient knowledge about the system, hence, one needs to keep track of all the past observation and actions variables. In this section, we will focus on the effect of the finite memory use on the performance decrease as opposed to the full memory of the past. To this end, we construct an alternative fully observed MDP reduction using the predictor from $N$ stages earlier and the most recent $N$ information variables (that is, measurements and actions). 
This new construction allows us to highlight the most recent information variables and {\it compress} the information coming from the past history via the predictor as a probability measure valued variable.


Consider the following state variable at time $t$:
\begin{align}\label{finite_belief_state}
\hat{z}_t=(\pi_{t-N}^-,h_t^N)
\end{align}
where, for $N\geq 1$
\begin{align*}
\pi_{t-N}^-&=P^\mu(X_{t-N}\in \cdot|y_{t-N-1},\dots,y_0,u_{t-N-1},\dots,u_0),\\
h_t^N&=\{y_t,\dots,y_{t-N},u_{t-1},\dots,u_{t-N}\}
\end{align*}
and $h_t^N=y_t$ for $N=0$
with $\mu$ being the prior probability measure on $X_0$. \adk{ Above, we use the notation
\begin{align*}
P^\mu(X_{t-N}\in \cdot|y_{t-N-1},\dots,y_0,u_{t-N-1},\dots,u_0):= Pr(X_{t-N}\in \cdot|y_{t-N-1},\dots,y_0,u_{t-N-1},\dots,u_0)
\end{align*}
which is the posterior distribution of $X_{t-N}$ conditioned on $\{y_{t-N-1},\dots,y_0,u_{t-N-1},\dots,u_0\}$ with a prior distribution $\mu\sim X_0$.}

 The state space with this representation is $\hat{\mathcal{Z}}=\P(\mathds{X})\times \mathds{Y}^{N+1}\times \mathds{U}^{N}$ where we equip $\hat{\mathcal{Z}}$ with the product topology where we consider the weak convergence topology on the $\P(\mathds{X})$ coordinate and the usual (coordinate) topologies on $\mathds{Y}^{N+1}\times \mathds{U}^{N}$ coordinates. 

We can now define the stage-wise cost function and the transition probabilities. Consider the new cost function $\hat{c}:\hat{\mathcal{Z}}\times \mathds{U}\to \mathds{R}$, using the cost function $\tilde{c}$ of the belief MDP (defined in (\ref{belief_cost})) such that 
\begin{align}\label{hat_cost}
&\hat{c}(\hat{z}_t,u_t)=\hat{c}(\pi_{t-N}^-,h_t^N,u_t)
=\int_\mathds{X}c(x_t,u_t)P^{\pi^-_{t-N}}(dx_t|y_t,\dots,y_{t-N},u_{t-1},\dots,u_{t-N}).
\end{align}
\adk{In particular, noting that 
\begin{align*}
\pi_{t-N}^-&=P^\mu(X_{t-N}\in \cdot|y_{t-N-1},\dots,y_0,u_{t-N-1},\dots,u_0)
\end{align*}
we have 
\begin{align*}
P^{\pi^-_{t-N}}(dx_t|y_t,\dots,y_{t-N},u_{t-1},\dots,u_{t-N}) = P^\mu(dx_t|y_t,\dots,y_0,u_{t-1},\dots,u_0)
\end{align*}
which implies 
\begin{align*}
\hat{c}(\hat{z}_t,u_t)=\hat{c}(\pi_{t-N}^-,h_t^N,u_t)
&=\int_\mathds{X}c(x_t,u_t)P^{\pi^-_{t-N}}(dx_t|y_t,\dots,y_{t-N},u_{t-1},\dots,u_{t-N})\\
&= \int_\mathds{X}c(x_t,u_t)P^{\mu}(dx_t|y_t,\dots,y_{0},u_{t-1},\dots,u_{0}).
\end{align*}}
\adk{
Furthermore, we can define the transition probabilities for $N=1$ as follows: for some $A\in \B(\hat{\mathcal{Z}})$ such that 
\[\ A= B\times\{\hat{y}_{t+1},\hat{y}_{t},\hat{u}_{t}\},\quad  B\in\B(\P(\mathds{X})), \quad \hat{y}_{t+1},\hat{y}_{t},\hat{u}_{t}\in \mathds{Y}^2\times\mathds{U} \]
conditioned on a path of state and action variables $\hat{z}_{t},\dots,\hat{z}_0,u_{t},\dots,u_0$, we write
\begin{align}\label{fin_kernel}
&Pr(\hat{Z}_{t+1}\in A|\hat{z}_{t},\dots,\hat{z}_0,u_{t},\dots,u_0) =Pr(\Pi_{t}^-\in B,{Y}_{t+1}=\hat{y}_{t+1},{Y}_{t}=\hat{y}_{t},{U}_{t}=\hat{u}_{t}|\pi_{[0,t-1]}^-,y_{[0,t]},u_{[0,t]})\nonumber\\
&=\mathds{1}_{\{y_{t}=\hat{y}_{t},u_t=\hat{u}_{t},G(\pi_{t-1}^-,y_{t-1},u_{t-1})\in B\}} P^{\pi_{t-1}^-}(\hat{y}_{t+1}|y_t,y_{t-1},u_{t},u_{t-1})\nonumber\\
&=Pr(\Pi_{t}^-\in B,{Y}_{t+1}=\hat{y}_{t+1},{Y}_{t}=\hat{y}_{t},{U}_{t}=\hat{u}_{t}|\pi_{t-1}^-,y_t,y_{t-1},u_t,u_{t-1})\nonumber\\
&=Pr(\hat{Z}_{t+1}\in A|\hat{z}_t,u_t)=:\int_A\hat{\eta}(d\hat{z}_{t+1}|\hat{z}_t,u_t)
\end{align}
where the map $G$ is defined as 
\begin{align*}
&G(\pi_{t-1}^-,y_{t-1},u_{t-1})=P^\mu(X_{t}\in\cdot|y_{t-1},\dots,y_0,u_{t-1},\dots,u_0).
\end{align*}}

Hence, we have a proper fully observed MDP, with the cost function $\hat{c}$, transition kernel $\hat{\eta}$ and the state space $\hat{\mathcal{Z}}$. \adk{ Furthermore, we denote by $\hat{\Gamma}$, the set of admissible policies for the new state model defined in this section.}


We now write the discounted cost optimality equation for the newly constructed finite window belief MDP. 
\begin{align}\label{fin_wind_val}
J^*_\beta(\hat{z})&=\inf_{u\in\mathds{U}}\left(\hat{c}(\hat{z},u)+\beta \int J_\beta^*(\hat{z}_1)\hat{\eta}(d\hat{z}_1|\hat{z},u)\right).
\end{align}


This construction results in a fully observed control problem with a state space $\hat{\mathcal{Z}}$ that does not grow over time, and is without loss of optimality. However, the first coordinate of the state variable $\hat{z}_t = (\pi^-_{t-N} , h_t^N)$, which is the predictor at time $t-N$, requires the model knowledge and can be practically hard to track much like the belief state. Therefore, we introduce an approximation method in the following section that maps this predictor to a fixed point and keeps the finite window information variables untouched.

\subsection{Approximation of the Finite Window Belief-MDP}\label{app_section}
The alternative belief MDP model constructed in the previous section lives in the state space
\begin{align*}
\hat{\mathcal{Z}}=\bigg\{&\pi,y_{[0,N]},u_{[0,N-1]}:\pi\in\mathcal{P}(\mathds{X}), y_{[0,N]}\in{\mathds{Y}}^{N+1}, u_{[0,N-1]}\in\mathds{U}^N\bigg\},
\end{align*}
where the first coordinate summarizes the past information, and the second and the last coordinates carry the information from the most recent $N$ time steps.

Consider the following set $\hat{\mathcal{Z}}_{\pi^*}^N$ for a fixed $\pi^*\in\P(\mathds{X})$
\begin{align*}
\hat{\mathcal{Z}}_{\pi^*}^N=\bigg\{\pi^*,y_{[0,N]},u_{[0,N-1]}: y_{[0,N]}\in{\mathds{Y}}^{N+1}, u_{[0,N-1]}\in\mathds{U}^N\bigg\}
\end{align*}
such that the state at time $t$ is $\hat{z}_t^N=(\pi^*,h_t^N)$. Compared to the state $\hat{z}_t=(\pi_{t-N}^-,h_t^N)$ defined in (\ref{finite_belief_state}), this approximate model uses $\pi^*$ as the predictor, no matter what the real predictor at time $t-N$ is.

The cost function is defined as 
\begin{align}\label{cost_window}
&\hat{c}(\hat{z}^N_t,u_t)=\hat{c}(\pi^*,h_t^N,u_t)=\int_\mathds{X}c(x_t,u_t)P^{\pi^*}(dx_t|y_t,\dots,y_{t-N},u_{t-1},\dots,u_{t-N}).
\end{align}
\adk{
We define the controlled transition model  for some $\hat{z}_{t+1}^N=(\pi^*,h_{t+1}^N)$ and $\hat{z}_{t}^N=(\pi^*,h_{t}^N)$
\begin{align}\label{eta_N}
&\hat{\eta}^N(\hat{z}_{t+1}^N|\hat{z}_t^N,u_t)=\hat{\eta}^N(\pi^*,h_{t+1}^N|\pi^*,h_t^N,u_t):=\hat{\eta}\bigg(\P(\mathds{X}),h_{t+1}^N|\pi^*,h_t^N,u_t\bigg)
\end{align}
where $\hat{\eta}$ is the transition kernel of the alternative finite window belief MDP reduction, see (\ref{fin_kernel}). In words,  starting from the state $\hat{z}_t^N=(\pi^*,h_t^N)$ under the action $u_t$, for any possible future state under the dynamics of the true world (i.e. $\hat{\eta}$), we apply a crude quantization for the first coordinate and map any $\pi_{t-N+1}^-\in\P(\mathds{X})$ to $\pi^*$. 

For simplicity, if we assume $N=1$, then the transitions can be rewritten for some $h_{t+1}^N=(\hat{y}_{t+1},\hat{y}_t,\hat{u}_t)$ and $h_t^N=(y_t,y_{t-1},u_{t-1})$
\begin{align}\label{eta_N_1}
\hat{\eta}^N(\pi^*,\hat{y}_{t+1},\hat{y}_t,\hat{u}_t|\pi^*,y_t,y_{t-1},u_{t-1},u_t)&=\hat{\eta}(\P(\mathds{X}),\hat{y}_{t+1},\hat{y}_t,\hat{u}_t|\pi^*,y_t,y_{t-1},u_{t-1},u_t)\nonumber\\
&=\mathds{1}_{\{y_t=\hat{y}_t,u_t=\hat{u}_t\}}P^{\pi^*}(\hat{y}_{t+1}|y_t,y_{t-1},u_t,u_{t-1}).
\end{align} 



We denote the optimal value function for the approximate model by $J_\beta^N$. We also denote by $\phi^N$ an optimal policy for the resulting approximate model.} 
Note that both $J^N_\beta$ and $\phi^N$ are defined on the set $\hat{\mathcal{Z}}^N_{\pi^*}$. However, we can simply extend them to the set $\hat{\mathcal{Z}}$ by defining for any $\hat{z}=(\pi,y_1,y_0,u_0)\in\hat{\mathcal{Z}}$ (assuming $N=1$)
\begin{align*}
\tilde{J}^N_\beta(\hat{z})=\tilde{J}^N_\beta(\pi,y_1,y_0,u_0)&:=J^N_\beta(\pi^*,y_1,y_0,u_0)\\
\tilde{\phi}^N(\hat{z})=\tilde{\phi}^N(\pi,y_1,y_0,u_0)&:=\phi^N(\pi^*,y_1,y_0,u_0).
\end{align*}
Another interpretation of the extension is the following: Since the predictor $\pi^*$ is fixed $\phi^N$ can be thought as a map from $h_t^N$ to $\mathds{U}$, hence it maps the most recent $N$ information variables to the control actions. At any given time, one can simply choose the control action by looking at the most recent $N$ information variables according to $\phi^N$ and ignore the past. See Figure \ref{FigFiniteWMDP} for a comparison of finite memory belief reduction
\begin{figure}
\centering
\epsfig{figure=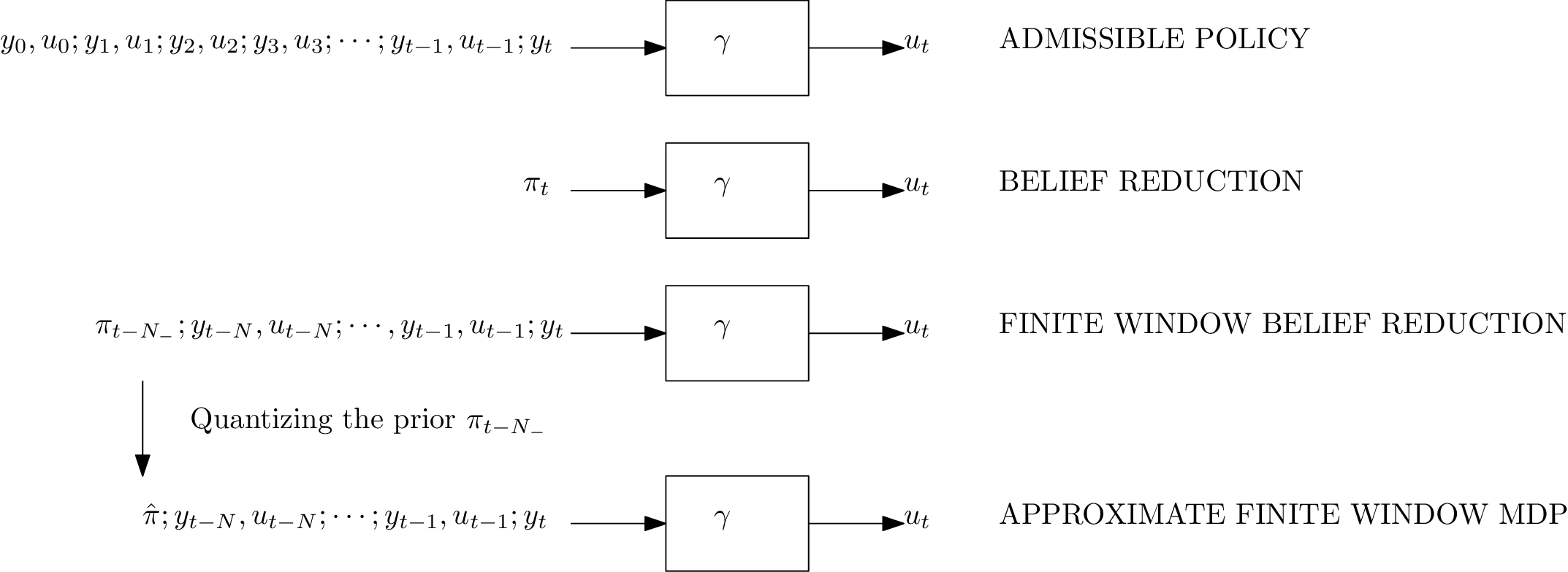,height=5.5cm,width=12cm}
\caption{Construction of the Finite-Window Approximate MDP from the Finite-Window Belief-MDP\cite{kara2021convergence}. The quantization of the finite window MDP model leads to the collapse of the first coordinate to a fixed measure.}\label{FigFiniteWMDP}
\end{figure}

We define the following constant:
\begin{align}\label{loss_constant}
L_t:=\sup_{\hat{\gamma}\in\hat{\Gamma}}E_{\pi_0^-}^{\hat{\gamma}}\bigg[\|P^{\pi_t^-}(X_{t+N}\in\cdot|Y_{[t,t+N]},U_{[t,t+N-1]})
-P^{\pi^*}(X_{t+N}\in\cdot|Y_{[t,t+N]},U_{[t,t+N-1]})\|_{TV}\bigg]
\end{align} 
which is the expected bound on the total variation distance between the posterior distributions of $X_{t+N}$ conditioned on the same observation and control action variables $Y_{[t,t+N]},U_{[t,t+N-1]}$ when the prior distributions of $X_{t}$ are given by $\pi_t^-$ and $\pi^*$.  \adk{Above, $\hat{\Gamma}$ denote the set of admissible policies for the alternative finite window based state model defined in Section \ref{alt_bel_model}.} 

\begin{prop}\cite[Theorem 3.3]{kara2021convergence}\label{cont_bound}
Let $X_0\sim\mu$ and consider $H_0^N=\{Y_0,\dots,Y_N,U_0,\dots,U_{N-1}\}$  with a policy $\hat{\gamma}$ acting on the first $N$ steps. 
For $\hat{Z}_0=(\mu,H_0^N)$,
\begin{align*}
E_{\mu}^{\hat{\gamma}}\left[\left|J_\beta(\hat{Z}_0,\tilde{\phi}^N) -J^*_\beta(\hat{Z}_0)\right|\right]\leq  \frac{2\|c\|_\infty }{(1-\beta)}\sum_{t=0}^\infty\beta^tL_t
\end{align*}
where $J_\beta(\hat{Z}_0,\tilde{\phi}^N)$ is the cost under the finite window policy $\tilde{\phi}^N$, and $J^*_\beta(\hat{Z}_0)$ is the optimal value function for the initial point $\hat{Z}_0$ (see (\ref{fin_wind_val})). Furthermore, the expectation is with respect to the random realizations of $H_0^N$.
\end{prop}

The finite memory approximation method we have presented in this section results in a compression scheme with a loss of optimality since the information prior to time $t-N$ is ignored. This loss, on the other hand, is controllable with the term $L_t$ (\ref{loss_constant}). Furthermore, the state space of this approximation is $\mathds{Y}^N\times \mathds{U}^{N-1}$ where $N$ is fixed and thus the state space does not grow over time. To track this finite memory state, the controller do not have to perform an extra compression step and can simply use the finite memory variables as they are. 

However, for continuous observation spaces, the finite memory approximation will still not be leading to a finite model. To this end, in the next section, we will focus on the discretization of the observation space. We first focus on the observation discretization without the finite memory requirement, and then we combine the observation discretization with the finite memory method.

\section{Quantization of Observations and Loss in Performance}\label{obs_disc}
The belief MDP reduction reveals that for the optimal performance, belief state is a sufficient knowledge about the system That is, at any given time $t$, the controller can use
\begin{align*}
z_t:=Pr\{X_{t} \in \,\cdot\, | Y_0,\ldots,Y_t, U_0, \ldots, U_{t-1}\} \in \P(\sX). 
\end{align*}
Hence, one needs to keep track of all the past observation and control action variables. Even when the memory is not a constraint, computation of the belief state is not trivial as the observation space is assumed to be continuous. 

We first focus on the discretization of the observation variables. For the discretization, we choose disjoint subsets $\{B_i\}_{i=0}^M\subset\mathds{Y}$ such that $\cup_iB_i=\mathds{Y}$. We define a finite set 
\begin{align*}
\mathds{Y}_M:=\{y_0,\dots,y_M\}.
\end{align*}
Furthermore, the discretization map $\psi_{\mathds{Y}}:\mathds{Y}\to \hat{\mathds{Y}}_M$ is defined such that 
\begin{align}\label{quant_map}
\psi_{\mathds{Y}}(y)=y_i, \text{ if } y\in B_i.
\end{align}
In other words, the discretization checks what bin or set $y$ belongs to and maps it to the representative element of that bin. Selection of the disjoint sets depends on the application and the control problem.

Using the discretization procedure, one can define a further POMDP model, which uses a channel, say $\hat{O}(\cdot|x)$, defined on the finite space $\hat{\mathds{Y}}_M$. Note that this channel is different than the one introduced in (\ref{disc_hidden_model}), but we use the same notation. For any $y_i\in\hat{\mathds{Y}}_M$
\begin{align}\label{deg_channel}
\hat{O}(y_i|x)=O(B_i|x).
\end{align}
This channel is clearly degraded with respect to the original channel, since $y_i$ is conditionally independent of $x$ given the original observation $y$.

Therefore, for prior distribution on $x_0$ given by $\mu$, one can define the optimal cost under the discrete observations by 
\begin{align*}
J_\beta^*(\mu,\hat{O})=\inf_{\hat{\gamma}\in\hat{\Gamma}}J_\beta(\mu,\hat{O},\gamma),
\end{align*}
where the policies, $\hat{\gamma}\in\hat{\Gamma}$ are measurable  with respect to the $\sigma$-algebra
generated by the new information variables where the observations take values from the finite set $\hat{\mathds{Y}}_M$.

\subsection{Effect of Observation Discretization}\label{finite_obs}
In this section, we study the effect of observation discretization on the optimal value. Recall that the original channel is modeled as a stochastic kernel and denoted by $O(dy|x)$. Furthermore, we assume that this channel admits a density function with respect to a reference measure $\lambda(dy)$ such that
\begin{align*}
O(dy|x)=g(x,y)\lambda(dy).
\end{align*}

The observation space discretization is introduced in Section \ref{obs_disc}, and the resulting channel is denoted by $\hat{O}$, which is defined on the finite space $\hat{\mathds{Y}}_M$ such that for any $y_i\in\hat{\mathds{Y}}_M$
\begin{align*}
\hat{O}(y_i|x)=O(B_i|x).
\end{align*}
where $B_i$'s are the quantization bins (or the aggregate sets). We will further assume that the observation space {$\mathds{Y}\subset \mathds{R}^n$} is compact. Our goal in this section is to find bounds for the term
\begin{align*}
\inf_{\hat{\gamma}\in\hat{\Gamma}}J_\beta(\mu,\hat{O},\hat{\gamma})-\inf_{\gamma\in\Gamma}J_\beta(\mu,O,\gamma)
\end{align*}
where $\hat{\Gamma}$ is the set of policies that use the discretized observations, and $\Gamma$ is the policies that use the original observations, and $\mu$ is the prior distribution of $x_0$. Furthermore, the first term is greater as $\hat{O}$ is a degraded channel in comparison to $O$.

We define the following notation, denoting the uniform error bound due to the discretization of the observations
\begin{align}
L_{\mathds{Y}}=\max_{i\in \{1,\dots,M\}}\sup_{y,y'\in B_i}|y-y'|.
\end{align}
We now state the assumptions and the result formally.
\begin{assumption}\label{main_assmp2}
\begin{itemize}
\item $O(dy|x)=g(x,y)\lambda(dy)$, and $g(y,x)$ is continuous in $y$ for every $x\in\mathds{X}$.
\item Stage-wise cost function $c(x,u)$ is bounded such that $\sup_{x,u}c(x,u):=\|c\|_\infty<\infty$.  
\end{itemize}
\end{assumption}
\begin{assumption}\label{main_assmp}
\begin{itemize}
\item {$\mathds{Y}\subset\mathds{R}^n$} is compact.
\item $O(dy|x)=g(x,y)\lambda(dy)$, and $g(y,x)$ is Lipschitz in $y$, such that $|g(x,y)-g(x,y')|\leq\alpha_{\mathds{Y}} |y-y'|$ for every $y,y'\in\mathds{Y}$ and $x\in\mathds{X}$ for some $\alpha_{\mathds{Y}}<\infty$.
\item Stage-wise cost function $c(x,u)$ is bounded such that $\sup_{x,u}c(x,u):=\|c\|_\infty<\infty$.  
\end{itemize}
\end{assumption}

\begin{theorem}\label{disc_obs2}
Under Assumption \ref{main_assmp2},
\begin{align*}
\lim_{L_\mathds{Y}\to 0}\left(J_\beta^*(\mu,\hat{O})-J_\beta^*(\mu,O)\right)=0.
\end{align*}
where $J_\beta^*(\mu,\hat{O}):=\inf_{\hat{\gamma}\in\hat{\Gamma}}J_\beta(\mu,\hat{O},\hat{\gamma})$ and $J_\beta^*(\mu,O):=\inf_{\gamma\in\Gamma}J_\beta(\mu,O,\gamma)$.
\end{theorem}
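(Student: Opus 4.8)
The plan is to handle the two directions separately and reduce everything to a statement about convergence of channels. Since the quantized observation $\hat Y_t=\psi_{\mathds{Y}}(Y_t)$ is a deterministic (measurable) function of the true observation $Y_t$, every policy in $\hat\Gamma$ is also a policy in $\Gamma$ that merely discards the within-bin resolution; consequently $J_\beta^*(\mu,\hat O)\ge J_\beta^*(\mu,O)$ for every partition, as already noted in the text. It therefore remains to prove $\limsup_{L_{\mathds{Y}}\to 0}J_\beta^*(\mu,\hat O)\le J_\beta^*(\mu,O)$, i.e. that the cost penalty for discarding the within-bin information vanishes as the bins shrink.

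First I would re-express the finite-observation POMDP as a POMDP on the original space $\mathds{Y}$. Define the embedded channel $\tilde O(\cdot|x):=\sum_i O(B_i|x)\,\delta_{y_i}$, which transports all the bin-mass $O(B_i|x)$ to the representative point $y_i\in\mathds{Y}$. Because $\tilde O$ and $\hat O$ differ only through the bijective relabeling $y_i\leftrightarrow i$ of the observation alphabet, the two control problems are identical in law and $J_\beta^*(\mu,\hat O)=J_\beta^*(\mu,\tilde O)$. Writing $\tilde O_n$ for the embedded channels along a sequence with $L_{\mathds{Y}}\to 0$, the elementary but crucial estimate is that for every bounded continuous $h:\mathds{Y}\to\mathds{R}$,
\[
\Big|\int h\,d\tilde O_n(\cdot|x)-\int h\,dO(\cdot|x)\Big|=\Big|\sum_i\int_{B_i}\big(h(y_i)-h(y)\big)O(dy|x)\Big|\le \omega_h(L_{\mathds{Y}}),
\]
where $\omega_h$ is the modulus of continuity of $h$, finite and vanishing at $0$ because $\mathds{Y}$ is compact (so $h$ is uniformly continuous). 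This bound is independent of $x$, so $\tilde O_n(\cdot|x)\to O(\cdot|x)$ weakly, uniformly over $x\in\mathds{X}$; note that this step uses only continuity of the test functions and no regularity of the model in $x$.

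The remaining and genuinely substantive step is to transfer this channel convergence to the optimal values, i.e. to show $J_\beta^*(\mu,\tilde O_n)\to J_\beta^*(\mu,O)$. The natural route is through the filter: the two POMDPs share $\mathds{X}$, $\mathcal{T}$, $c$ and $\mu$, and the belief-MDP cost $\tilde{c}(z,u)=\int_{\mathds{X}}c(x,u)\,z(dx)$ does not depend on the channel, so only the belief-transition kernel changes. I would show that the one-step Bayesian filter map and the predictive observation law induced by $\tilde O_n$ converge to those induced by $O$, and then assemble the discounted sum, letting $L_{\mathds{Y}}\to 0$ term by term and using boundedness of $c$ together with $\beta<1$ to control the tail. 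Here the continuity of $g(x,\cdot)$ enters precisely: the quantized likelihood $O(B_i|x')=\int_{B_i}g(x',y)\lambda(dy)$ approaches the true likelihood $g(x',y)$ as the bin containing $y$ shrinks, and this drives the filter convergence. This robustness/continuity-of-value argument is exactly the mechanism behind the results cited earlier (e.g.\ \cite[Theorem 4.4]{kara2020robustness}), now applied to a channel rather than a transition perturbation.

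I expect the main obstacle to be this last step, and specifically the normalization in the Bayes update: the filter divides by the predictive likelihood, which can be small, so pushing the weak (not total-variation) convergence of $\tilde O_n$ through the division requires care. The cleanest way to avoid this difficulty is to work at the level of policies: fix an $\varepsilon$-optimal policy for the original problem, show it can be approximated arbitrarily well by a policy implementable from the quantized observations once the bins are fine enough (using the uniform weak convergence above together with the density representation $O(dy|x)=g(x,y)\lambda(dy)$ and finiteness of $\mathds{U}$ to control the measure of the set where the two policies disagree), and thereby squeeze $J_\beta^*(\mu,\tilde O_n)$ between $J_\beta^*(\mu,O)$ and $J_\beta^*(\mu,O)+o(1)$, which combined with the data-processing lower bound closes the argument.
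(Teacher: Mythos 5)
Your overall skeleton matches the paper's: the data-processing inequality gives $J_\beta^*(\mu,\hat O)\ge J_\beta^*(\mu,O)$ for free, and the work is in the reverse direction. But the step you yourself flag as ``genuinely substantive'' is where the argument has a real gap, and the specific embedding you chose is what creates it. By re-embedding $\hat O$ as $\tilde O(\cdot|x)=\sum_i O(B_i|x)\,\delta_{y_i}$ you only obtain \emph{weak} convergence of the channel, and weak convergence of observation channels does not in general imply convergence of the optimal POMDP cost: the Bayes update divides by the predictive likelihood, the filter map is not continuous under weak perturbations of the observation law, and $\tilde O_n(\cdot|x)$ is in fact at total variation distance essentially $2$ from $O(\cdot|x)$ whenever $\lambda$ is atomless. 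Your fallback (approximate an $\varepsilon$-optimal policy by a bin-measurable one) runs into the same kind of trouble from the other side: an optimal policy is merely measurable in $y$, so it need not be close to constant on small bins without a Lusin-type regularization, and the set where the policies disagree perturbs the entire future path distribution, which must then be controlled over the infinite horizon. Neither route is carried out, and neither is routine.

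The missing idea in the paper's proof is to embed $\hat O$ differently: define $O'(dy|x)$ as the channel that, given the bin index, emits a point of $B_i$ distributed uniformly with respect to $\lambda$, so that $O'$ has density $g'(x,y)=\frac{1}{\lambda(B_i)}\int_{B_i}g(x,y')\lambda(dy')$ for $y\in B_i$. Since $O'$ is a garbling of $\hat O$, one still has $J_\beta^*(\mu,\hat O)\le J_\beta^*(\mu,O')$; but now $g'(x,\cdot)$ is the bin average of $g(x,\cdot)$, so continuity (resp.\ Lipschitz continuity) of $g(x,\cdot)$ forces $\sup_x\|O'(\cdot|x)-O(\cdot|x)\|_{TV}\to 0$ (resp.\ $\le\alpha_{\mathds{Y}}L_{\mathds{Y}}$). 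Total variation convergence of channels is strong enough to bound $|J_\beta(\mu,O',\gamma)-J_\beta(\mu,O,\gamma)|$ for a \emph{fixed} policy via the robustness estimate $\sum_k\beta^k\|c\|_\infty k\sup_x\|O'(\cdot|x)-O(\cdot|x)\|_{TV}$ (the cited Theorem 6.2 of the channel-optimization reference), evaluated at the optimal policy for $O$; this bypasses the filter normalization and the policy-approximation issues entirely. In short: your decomposition is right, but you need the absolutely continuous (bin-uniform) reconstruction channel rather than the Dirac embedding, precisely because it upgrades weak convergence to total variation convergence, which is the mode of convergence under which the value is actually continuous.
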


\begin{theorem}\label{disc_obs}
Under Assumption \ref{main_assmp},
\begin{align*}
J_\beta^*(\mu,\hat{O})-J_\beta^*(\mu,O)\leq \frac{\beta}{(1-\beta)^2}\|c\|_\infty \alpha_{\mathds{Y}} L_{\mathds{Y}}
\end{align*}
\end{theorem}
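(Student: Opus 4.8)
The plan is to reduce the discretized channel $\hat O$, which lives on the finite set $\hat{\mathds{Y}}_M$, to an \emph{information-equivalent} channel $\bar O$ that lives on the original space $\mathds{Y}$ and admits a density with respect to $\lambda$; this lets me compare $O$ and the discretization on a common space and exploit the Lipschitz density assumption directly. Concretely, I would set $\bar O(dy|x)=\bar g(x,y)\lambda(dy)$ with $\bar g(x,y):=O(B_i|x)/\lambda(B_i)$ for $y\in B_i$, i.e. replace $g(x,\cdot)$ by its $\lambda$-average over each bin. One checks $\int_{\mathds{Y}}\bar g(x,y)\lambda(dy)=\sum_i O(B_i|x)=1$, so $\bar O$ is a genuine channel. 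The first claim is that $J_\beta^*(\mu,\hat O)=J_\beta^*(\mu,\bar O)$: under $\bar O$ the observation can be generated by first drawing the bin index with probabilities $O(B_i|x)$ and then an independent within-bin coordinate distributed as $\lambda|_{B_i}/\lambda(B_i)$, which carries no information about the state beyond the bin; hence any $\bar O$-policy may ignore this exogenous coordinate without changing its cost, and the two models share the same optimal value (the standard fact that appending state-independent noise to the observation does not change the optimal cost).

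Having replaced $\hat O$ by $\bar O$, the theorem becomes a statement about robustness of the POMDP optimal cost to a total-variation perturbation of the channel. I would first record the density estimate
\begin{align*}
\kappa:=\sup_{x}\|O(\cdot|x)-\bar O(\cdot|x)\|_{TV}=\sup_x\int_{\mathds{Y}}|g(x,y)-\bar g(x,y)|\lambda(dy)\le \alpha_{\mathds{Y}}L_{\mathds{Y}},
\end{align*}
which follows because, for $y\in B_i$, $\bar g(x,y)$ is an average of $g(x,\cdot)$ over $B_i$ and $|g(x,y)-g(x,y')|\le\alpha_{\mathds{Y}}L_{\mathds{Y}}$ whenever $y,y'\in B_i$, together with $\lambda\in\P(\mathds{Y})$. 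Then, writing $\Delta(\gamma):=J_\beta(\mu,\bar O,\gamma)-J_\beta(\mu,O,\gamma)$ for a common admissible policy $\gamma$ (both channels share the space $\mathds{Y}$), I would use $|J_\beta^*(\mu,\bar O)-J_\beta^*(\mu,O)|\le\sup_\gamma|\Delta(\gamma)|$ and bound each $\Delta(\gamma)$ by coupling the two state--observation trajectories: since $c(X_t,U_t)$ is a bounded function of $(X_{[0,t]},Y_{[0,t]})$ (the actions being deterministic functions of the observations), a telescoping argument that swaps $O$ for $\bar O$ one observation at a time, using the non-expansiveness of total variation under the shared kernels $\mathcal{T}$ and $\gamma$, gives $\|\mathrm{Law}^{O}(X_{[0,t]},Y_{[0,t]})-\mathrm{Law}^{\bar O}(X_{[0,t]},Y_{[0,t]})\|_{TV}\le (t+1)\kappa$. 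Summing the discounted per-stage errors $\beta^t\|c\|_\infty(t+1)\kappa$ and using $\sum_t(t+1)\beta^t=(1-\beta)^{-2}$ yields a bound of the form $\tfrac{\|c\|_\infty}{(1-\beta)^2}\kappa\le\tfrac{\|c\|_\infty}{(1-\beta)^2}\alpha_{\mathds{Y}}L_{\mathds{Y}}$; tracking the discounting more carefully (the stage cost at a given belief does not depend on the channel, so the perturbation only enters the $\beta$-discounted continuation) recovers the stated factor $\tfrac{\beta}{(1-\beta)^2}$.

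The main obstacle is the robustness step, and in particular the temptation to route it through the belief-MDP transition kernel. That approach fails: even when $\kappa$ is small, the induced next-belief laws $\eta^{O}(\cdot|z,u)$ and $\eta^{\bar O}(\cdot|z,u)$ need not be close in total variation on $\P(\P(\mathds{X}))$, since two nearby posteriors sit at maximal total-variation distance as point masses. Consequently one cannot bound $\int J\,d\eta^{O}-\int J\,d\eta^{\bar O}$ using only $\|J\|_\infty$ together with a belief-kernel total-variation estimate; doing so would require continuity of the value function in the belief (a filter-stability-type property), which is exactly what is \emph{not} assumed here. This is why I would keep the comparison at the level of the original state--observation process and control it by the trajectory coupling above, where only boundedness of $c$ and the channel estimate $\kappa\le\alpha_{\mathds{Y}}L_{\mathds{Y}}$ are needed. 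A secondary point to get right is the equivalence $J_\beta^*(\mu,\hat O)=J_\beta^*(\mu,\bar O)$ in both directions, i.e. that randomizing over the fictitious within-bin coordinate cannot help and that ignoring it cannot hurt.
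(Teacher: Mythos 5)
Your proposal follows essentially the same route as the paper's proof: you introduce the same auxiliary channel on $\mathds{Y}$ with bin-averaged density $O(B_i|x)/\lambda(B_i)$, relate its optimal value to that of $\hat{O}$ by the ``independent within-bin noise carries no information'' argument, bound $\sup_x\|O(\cdot|x)-O'(\cdot|x)\|_{TV}$ by $\alpha_{\mathds{Y}}L_{\mathds{Y}}$ via the Lipschitz density, and finish with a total-variation robustness bound on the discounted cost. The only difference is that the paper obtains the final step by citing \cite[Theorem 6.2]{YukselOptimizationofChannels}, which directly gives the $\sum_k \beta^k k$ factor and hence $\beta/(1-\beta)^2$, whereas you sketch the telescoping coupling argument yourself.
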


\begin{proof}
We will only prove Theorem \ref{disc_obs}, the proof for Theorem \ref{disc_obs2} follows from almost identical steps.

We start by introducing a new channel $O'(dy|x)$ acting on the original observation space $\mathds{Y}$. The new channel is conditionally independent of everything else given the output of the discrete channel $\hat{O}$, and it gives a uniformly distributed (with respect to the measure $\lambda(dy)$) random value from $B_i$ if the output of the finite channel $\hat{O}$ is $y_i$. 
Hence, the new channel $O'$ admits a density function, say $g'$, with respect to $\lambda$, such that for some $y' \in B_i$
\begin{align*}
g'(x,y')=\frac{O(B_i|x)}{\lambda(B_i)}=\frac{1}{\lambda(B_i)}\int_{B_i}g(x,y)\lambda(dy).
\end{align*}
Note also that, since we assume that $\mathds{Y}$ is compact, the subsets $B_i$'s are bounded and the uniform distribution is well defined on these sets. Using the assumption that $g(x,y)$ is Lipschitz in $y$, we can conclude that 
\begin{align}\label{chan_lip}
|g'(x,y)-g(x,y)|\leq \alpha_{\mathds{Y}} \max_{i}\sup_{y,y'\in B_i}|y-y'|\leq \alpha_{\mathds{Y}} L_{\mathds{Y}}.
\end{align}
This new channel is introduced only for mathematical convenience, in order to define the degraded POMDP on the same observation space as the original POMDP model. Furthermore, since the only information one can infer about the hidden state $x$, is through the discrete observations $y_i$, the channel $O'$ is a garbling of the channel $\hat{O}$. Hence, it is a direct consequence of Blackwell's informativeness theorem \cite{blackwell1953equivalent} that
\begin{align*}
J_\beta^*(\mu,O')\geq J_\beta(\mu,\hat{O}).
\end{align*}
In fact, one can show that the costs are indeed equal, however, we will only use the inequality as it is sufficient for our purpose. Furthermore, since $O'$ sends values from the original observation space $\mathds{Y}$, the POMDP model with $O'$ also uses the policies from the set $\Gamma$. Therefore, denoting the optimal policy designed for the original channel $O$ by $\gamma^*$, if we apply this policy for the channel $O'$, we get a larger cost, i.e. $J_\beta(\mu,O',\gamma^*)\geq J_\beta^*(\mu,O')$. Hence, for the rest of the proof, we will assume that both models use the same policy.


By \cite[Theorem 6.2]{YukselOptimizationofChannels} we establish the following relation:
\begin{align*}
&|J_{\beta}( \mu,O',\gamma) - J_{\beta}( \mu,O,\gamma)|\\
&\leq \sum_k \beta^k \|c\|_\infty k  \sup_{x \in \mathds{X}} \|O'(.|x)- O(.|x)\|_{TV}.
\end{align*}
Furthermore, using (\ref{chan_lip}), we can write that 
\begin{align*}
\sup_{x \in \mathds{X}} \|O'(.|x)- O(.|x)\|_{TV}\leq \alpha_\mathds{Y} L_{\mathds{Y}},
\end{align*}
which concludes the proof.
\end{proof}

\section{Near Optimality of Finite Memory Feedback with Approximated Discrete Observations}\label{FiniteMemDiscreteObs}
Although, the observations are discretized, the resulting belief state still takes values from the space of probability measures and the sufficient information for the optimal control of the POMDP with the finite observation space, requires the controller to keep track of all the history. Therefore, as the next step of the approximation, we only focus on a finite memory of the information variables. We restrict the controller to use the new information variables
\begin{align*}
\hat{H}_t^N=\{\hat{Y}_{[t-N,t]},U_{[t-N,t-1]}\},
\end{align*}
where $\hat{Y}_t$ takes values from the finite set $\hat{\mathds{Y}}_M$. If we denote the set of policies that use $\hat{H}_t^N$, by $\hat{\Gamma}^N$, the question we ask is the following: can we find bounds on
\begin{align}\label{problem}
\inf_{\gamma^N\in\hat{\Gamma}^N}J_\beta(\mu,\hat{O},\gamma^N)-J_\beta^*(\mu,O)
\end{align}
that is what do we loose if we use a finite memory of discretized observations in comparison the optimal cost of the original POMDP with continuous observations. This problem also motivates the natural, though interesting, question: if one has a computational constraint, is it better to use a finer discretization of the observation space, or use a longer memory by keeping the dicretizaton rate the same. 

To analyze the problem, we first discretize the observations. We then analyze the effect of finite memory with the discretized observations. On the effect of the finite memory on the performance, in Section \ref{app_section}, we have presented bounds on the loss if one only uses a fixed length of most recent information variables. We remind that the presented results are be valid for any channel model including the degraded channel model resulting from the observation discretization.



\subsection{Combined Approximation Algorithm and the Near Optimality of the Approximate Policies Under Filter Stability}\label{app_alg}
In this section, we introduce the approximation methodology, which involves a combination of observation space discretization and use of finite memory. We then provide an upper bound on the performance loss when the approximation policy is applied in the original POMDP model, in terms of the approximation parameters. 

The approximation algorithm is as follows:
\begin{itemize}
\item[1. ] Pick disjoint subsets $\{B_i\}_{i=1}^M\subset \mathds{Y}$ such that $\cup_{i}B_i=\mathds{Y}$. Furthermore, pick the representative observation set $\hat{\mathds{Y}}_M=\{y_1,\dots,y_M\}$, and the discretization map $\psi_{\mathds{Y}}:\mathds{Y}\to \hat{\mathds{Y}}_M$ is defined such that 
\begin{align*}
\psi_{\mathds{Y}}(y)=y_i, \text{ if } y\in B_i.
\end{align*}
\item[2. ] Define a new channel $\hat{O}(\cdot|x)$ on the finite set $\hat{\mathds{Y}}_M$ such that for any $y_i\in\hat{\mathds{Y}}_M$
\begin{align*}
\hat{O}(y_i|x)=O(B_i|x).
\end{align*}
\item [3. ] Pick a window length $N$, and define a fully observed MDP, with state space $\hat{\mathds{Y}}_M^N\times\mathds{U}^{N-1}$, the cost function is defined by (\ref{cost_window}), and the transition model defined by (\ref{eta_N}) or (\ref{eta_N_1}). Note that, to define the cost and the transitions, the new channel $\hat{O}$ is used.
 \item [4.] Solve the optimality equation for the finite fully observed MDP, to construct a policy $\hat{\gamma}^N$.
\end{itemize}

\begin{theorem}\label{main_result2}

Suppose Assumption \ref{main_assmp2} holds and  the belief kernel $\eta$ is weakly continuous (\cite{FeKaZa12, KSYWeakFellerSysCont}).
Let $X_0\sim\mu$ and consider $H_0^N=\{Y_0,\dots,Y_N,U_0,\dots,U_{N-1}\}$  with a policy $\hat{\gamma}$ acting on the first $N$ steps. 
For $\hat{Z}_0=(\mu,H_0^N)$ (recall (\ref{finite_belief_state})),
if the policy found after the steps above, $\hat{\gamma}^N$ acts after the first $N$ steps
\begin{align*}
&\lim_{L_\mathds{Y}\to 0}E_{\mu}^{\hat{\gamma}}\left[J_\beta(\hat{Z}_0,\hat{\gamma}^N) -J^*_\beta(\hat{Z}_0)\right]\leq  \frac{2\|c\|_\infty }{(1-\beta)}\sum_{t=0}^\infty\beta^tL_t
\end{align*}
where $L_t$ is the filter stability term for the original channel $O$ and the expectation is with respect to the random realizations of $H_0^N$.

\end{theorem}
\begin{theorem}\label{main_result}
Suppose Assumption \ref{main_assmp} holds.
Let $X_0\sim\mu$ and consider $H_0^N=\{Y_0,\dots,Y_N,U_0,\dots,U_{N-1}\}$  with a policy $\hat{\gamma}$ acting on the first $N$ steps. 
For $\hat{Z}_0=(\mu,H_0^N)$ (recall (\ref{finite_belief_state})),
if the policy found after the steps above, $\hat{\gamma}^N$ acts after the first $N$ steps
\begin{align*}
&E_{\mu}^{\hat{\gamma}}\left[J_\beta(\hat{Z}_0,\hat{\gamma}^N) -J^*_\beta(\hat{Z}_0)\right]\leq   \frac{\beta}{(1-\beta)^2}\|c\|_\infty \alpha_{\mathds{Y}} L_\mathds{Y}+ \frac{2\|c\|_\infty}{(1-\beta)}\sum_{t=0}^\infty \beta^t L_t,
\end{align*}
where
\begin{align*}
 &L_{\mathds{Y}}:= \max_{i}\sup_{y,y'\in B_i}|y-y'|,\\
L&_t:=\sup_{\hat{\gamma}\in\hat{\Gamma}}E_{\mu}^{\hat{\gamma},\hat{O}}\bigg[\|P^{\pi_t^-}(X_{t+N}\in\cdot|\hat{Y}_{[t,t+N]},U_{[t,t+N-1]})\nonumber\\
&\qquad\qquad\qquad\qquad-P^{\pi^*}(X_{t+N}\in\cdot|\hat{Y}_{[t,t+N]},U_{[t,t+N-1]})\|_{TV}\bigg]
\end{align*}
and $\alpha_{\mathds{Y}}$ is the Lipschitz constant of the density function $g$ of the channel $O$.
\end{theorem}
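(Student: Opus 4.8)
The plan is to split the total error into the two sources the algorithm introduces—observation discretization and truncation of memory—and to control each by a result already proved. Concretely, I would insert the optimal cost $J_\beta^*(\mu,\hat O)$ of the discretized-observation POMDP as an intermediate quantity and write
\begin{align*}
J_\beta(\mu,O,\hat\gamma_N)-J_\beta^*(\mu,O)
=\Big(J_\beta(\mu,O,\hat\gamma_N)-J_\beta^*(\mu,\hat O)\Big)
+\Big(J_\beta^*(\mu,\hat O)-J_\beta^*(\mu,O)\Big).
\end{align*}
The second bracket is exactly the observation-discretization gap; it is nonnegative since $\hat O$ is a degraded channel, and Theorem \ref{disc_obs} bounds it by $\frac{\beta}{(1-\beta)^2}\|c\|_\infty\alpha_{\mathds{Y}}L_{\mathds{Y}}$, which is the first summand in the claim. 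It then remains to show the first bracket is bounded by the filter-stability term.

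The crucial gluing step is to argue that applying $\hat\gamma_N$ in the original model is identical, in law and hence in cost, to applying it in the discretized-observation POMDP, i.e. $J_\beta(\mu,O,\hat\gamma_N)=J_\beta(\mu,\hat O,\hat\gamma_N)$. This holds because $\hat\gamma_N$ depends only on the quantized window $\hat I_t^N=\{\hat Y_{[t-N,t]},U_{[t-N,t-1]}\}$, and the law of the quantized observation process obtained by first applying $O$ and then the quantizer $\psi_{\mathds{Y}}$ coincides with the law generated directly by $\hat O$, since $\hat O(y_i|x)=O(B_i|x)=\Pr(\psi_{\mathds{Y}}(Y)=y_i\mid x)$. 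Consequently the hidden-state dynamics (governed by the true $\mathcal{T}$) together with the action sequence induced by $\hat\gamma_N$ have the same joint law under both descriptions, so the accumulated discounted cost $\sum_{t}\beta^t E[c(X_t,U_t)]$ agrees.

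With this identification the first bracket becomes $J_\beta(\mu,\hat O,\hat\gamma_N)-J_\beta^*(\mu,\hat O)$, the suboptimality of the finite-window policy inside the discretized POMDP. Since $\hat\gamma_N$ is the optimal policy of the finite-window approximate MDP of Section \ref{app_section} built from the quantized channel $\hat O$, Proposition \ref{cont_bound} applied to this model gives, conditionally on $I_0^N$,
\begin{align*}
E_{\pi_0^-}^{\hat\gamma,\hat O}\!\left[\left|J_\beta(\hat z_0,\tilde\phi^N)-J_\beta^*(\hat z_0)\right|\,\middle|\,I_0^N\right]
\leq\frac{2\|c\|_\infty}{1-\beta}\sum_{t=0}^\infty\beta^t L_t,
\end{align*}
where $L_t$ is precisely the quantized-channel filter-stability term appearing in the statement. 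Taking the outer expectation over $I_0^N$ (the right-hand side is deterministic) and identifying $J_\beta^*(\hat z_0)$ with $J_\beta^*(\mu,\hat O)$ through the lossless belief-MDP reduction yields the second summand, and adding the two bounds gives the claim. I expect the main obstacle to be the gluing step: one must verify carefully that the policy realized in the true model genuinely sees the same quantized information process as in the constructed MDP, and that the conditional bound integrates cleanly against the initial-window distribution without producing extra terms. Note that the regularity of Assumption \ref{main_assmp} enters only through Theorem \ref{disc_obs}; the finite-memory half relies solely on the filter-stability quantity $L_t$.
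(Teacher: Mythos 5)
Your proposal is correct and follows essentially the same route as the paper: the identical decomposition through the intermediate quantity $J_\beta^*(\mu,\hat O)$, with the second bracket controlled by Theorem \ref{disc_obs} and the first by Proposition \ref{cont_bound} applied to the quantized channel $\hat O$. The only difference is that you spell out the gluing step (that $\hat\gamma_N$ acting on the true model with quantized observations induces the same cost as in the $\hat O$-POMDP), which the paper leaves implicit; this is a worthwhile clarification but not a different argument.
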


\begin{proof}[For Theorem \ref{main_result2} and \ref{main_result}]
We first consider the marginal distribution of $X_2$:
\begin{align*}
\mu_2(\cdot) = \int\mathcal{T}(\cdot|x_1,\hat{\gamma}(y_1,y_0))O(dy_1|x_1)\mathcal{T}(dx_1|x_0,\hat{\gamma}(y_0))O(dy_0|x_0)\mu(dx_0).
\end{align*}
Note that we have
\begin{align*}
E_{\mu}^{\hat{\gamma}}\left[J^*_\beta(\hat{Z}_0)\right]=J_\beta^*(\mu_2,O)
\end{align*}
We write
\begin{align*}
E_{\mu}^{\hat{\gamma}}\left[J_\beta(\hat{Z}_0,\hat{\gamma}^N) -J^*_\beta(\hat{Z}_0)\right]&\leq E_{\mu}^{\hat{\gamma}}\left[J_\beta(\hat{Z}_0,\hat{\gamma}^N) -J^*_\beta(\mu_2,\hat{O})\right] + E_{\mu}^{\hat{\gamma}}\left[J_\beta^*(\mu_2,\hat{O})-J^*_\beta(\hat{Z}_0)\right]\\
& =E_{\mu}^{\hat{\gamma}}\left[J_\beta(\hat{Z}_0,\hat{\gamma}^N) -J^*_\beta(\mu_2,\hat{O})\right] + E_{\mu}^{\hat{\gamma}}\left[J_\beta^*(\mu_2,\hat{O})-J^*_\beta(\mu_2,O)\right]
\end{align*}
Note that $\hat{\gamma}^N$ is the finite window policy for the channel $\hat{O}$. The first difference is bounded by Proposition \ref{cont_bound}. Therefore, the first bound is related to the controlled filter stability under the discrete channel $\hat{O}$. For the second term, we use Theorem \ref{disc_obs}.

For the proof of Theorem \ref{main_result2}, the second term goes to $0$, by Theorem \ref{disc_obs2}. For the first term, we again use Proposition \ref{cont_bound} with the fact that the filter stability term under channel $\hat{O}$ converges to the filter stability term under the channel $O$ if the belief kernel $\eta$ is weak Feller.
\end{proof}

\subsubsection{A Discussion on Theorem \ref{main_result}}
One can observe that the first term in the upper bound presented in Theorem \ref{main_result}, is only related to the observation discretization and is not affected by the finite memory use. The result indicates that higher Lipschitz constants will lead to higher performance improvements as the dicretization rate grows. Clearly, if the discretization rate is already high, increasing it further will not make the same impact. For example, for a bounded observation space $\mathds{Y}$, if the size of the chosen finite set is $M$, $L_\mathds{Y}$ can be approximated by $\frac{1}{M}$ under uniform quantization. Thus, increasing the size to $M+1$ would result in 
\begin{align*}
K \alpha_{\mathds{Y}} \frac{1}{M}-K \alpha_{\mathds{Y}} \frac{1}{M+1}=K \alpha_{\mathds{Y}} \frac{1}{M(M+1)}
\end{align*}
decrease on the first term of the upper bound where $K=\frac{\beta\|c\|_\infty}{(1-\beta)^2}$.

 The second term is related to the controlled filter stability under the discrete channel. This term is affected by both the window length $N$, and the rate of the discretization. Increasing the window length will clearly decrease the second term. The better discretization rate will change the second term, since how informative the observations are will also impact the filter stability. One can perform simulations to study the exact impact of the discretization on the controlled filter stability. In the following, we will provide some upper bounds for the second term, using Dobrushin coefficient (Definition \ref{dob_def}) of the original channel $O$.   

\subsubsection{Further Upper-bounds for Filter Stability via Dobrushin Coefficient}\label{dob_sec}

In this section, we discuss the term ${\it L_t}$ term defined in (\ref{loss_constant}). 

\begin{definition}\cite[Equation 1.16]{dobrushin1956central}\label{dob_def}
For a kernel operator $K:S_{1} \to \mathcal{P}(S_{2})$ (that is a regular conditional probability from $S_1$ to $S_2$) for standard Borel spaces $S_1, S_2$, we define the Dobrushin coefficient as:
\begin{align*}
\delta(K)&=\inf\sum_{i=1}^{n}\min(K(x,A_{i}),K(y,A_{i}))\label{Dob_def}
\end{align*}
where the infimum is over all $x,y \in S_{1}$ and all partitions $\{A_{i}\}_{i=1}^{n}$ of $S_{2}$.
\end{definition}

Note that for the channel $O$ Dobrushin coefficient is readily defined, for the transition kernel $T$, let
\[ \tilde{\delta}({ T}):=\inf_{u \in \mathds{U}} \delta({T}(\cdot|\cdot,u)). \]

We now state the following corollary to Theorem \ref{main_result} and \cite[Theorem 3.3]{mcdonald2020exponential}.
\begin{corollary}\label{cor_filter}
Suppose Assumption \ref{main_assmp} holds and let  $\alpha:=(1-\tilde{\delta}(\mathcal{T}))(2-\delta(O))<1$. Let $X_0\sim\mu$ and consider $H_0^N=\{Y_0,\dots,Y_N,U_0,\dots,U_{N-1}\}$  with a policy $\hat{\gamma}$ acting on the first $N$ steps. For the policy found with the algorithm presented in Section \ref{app_alg}, $\hat{\gamma}^N$, acting after the first $N$ steps we can write that,
\begin{align*}
&E_{\mu}^{\hat{\gamma}}\left[J_\beta(\hat{Z}_0,\hat{\gamma}^N) -J^*_\beta(\hat{Z}_0)\right]\leq \frac{\|c\|_\infty}{(1-\beta)^2} \left(\beta\alpha_{\mathds{Y}} L_\mathds{Y}+ 4\alpha^N\right).
\end{align*}
\end{corollary}
\begin{proof}
\cite[Theorem 3.3]{mcdonald2020exponential} implies that 
\begin{align*}
L&_t:=\sup_{\hat{\gamma}\in\hat{\Gamma}}E_{\mu}^{\hat{\gamma},\hat{O}}\bigg[\|P^{\pi_t^-}(X_{t+N}\in\cdot|\hat{Y}_{[t,t+N]},U_{[t,t+N-1]})\nonumber\\
&\qquad\qquad\qquad\qquad-P^{\pi^*}(X_{t+N}\in\cdot|\hat{Y}_{[t,t+N]},U_{[t,t+N-1]})\|_{TV}\bigg]\\
&\leq 2 \left((1-\tilde{\delta}(\mathcal{T}))(2-\delta(\hat{O}))\right)^N.
\end{align*}
Hence, we only need to show that 
\begin{align*}
\delta(\hat{O})\geq \delta(O).
\end{align*}
However, this is a direct consequence of the definition of Dobrushin coefficient, since for any discrete channel $\hat{O}$, the quantization bins $\{B_i\}$ forms a partition for $\mathds{Y}$.
\end{proof}
Corollary \ref{cor_filter} separates the effects of discretization and finite memory use, in two different terms. However, we caution the reader that this is only done for an upper bound. In fact, we do not bound the filter stability term of the discrete channel $\hat{O}$, using the filter stability term of the original channel $O$. Instead, we bound an upper bound of the filter stability term for $\hat{O}$, using an upper bound for the filter stability term of the channel $O$. This can be done since \cite[Theorem 3.3]{mcdonald2020exponential} favors ``non-informative'' channels.

\subsubsection{Utilizing Finite Measurements via a Hilbert Projective Metric Based Bound for Filter Stability}\label{hil_sec}

Via a somewhat different, and more direct, derivation,\cite[Section 4.2 and Theorem 17]{kara2020near} presented the following alternative condition involving sample path-wise uniform filter stability term
 \begin{align}\label{TVUnifB}
&\bar{L}^N_{TV}:=\sup_{pi\in \mathcal{P}(\mathds{X})}\sup_{y_{[0,N]},u_{[0,N-1]}}  \left\|P^{\pi}(\cdot|y_{[0,N]},u_{[0,N-1]})-P^{\pi^*}(\cdot|y_{[0,N]},u_{[0,N-1]})\right\|_{TV},
\end{align}
where $\pi^*$ has been fixed in Section \ref{app_section}. 
to show the following {\it uniform} error bound:
\begin{align}\label{jmlrboundF}
\sup_z\left|J_\beta(z,\gamma_N)-J^*_\beta(z)\right|\leq \frac{2(1+(\alpha_{\mathcal{Z}}-1)\beta)}{(1-\beta)^3(1-\alpha_{\mathcal{Z}}\beta)}\|c\|_\infty \bar{L}^N_{TV}
\end{align}
for all $\beta \in (0,1)$ under a contraction condition, for some constant $\alpha_{\mathcal{Z}}$ defined in \cite{kara2020near}.

Accordingly, a further method, and one which leads to complementary conditions given the above, for filter stability is via the Hilbert projective metric \cite[ Lemma 3.8]{le2004stability}. Via the Birkhoff-Hopf theorem, a controlled version of a contraction via the Hilbert metric \cite{le2004stability} can be utilized \cite{demirciRefined2023}:

\begin{definition}[Mixing kernel]\label{mixingKernelDef}
    The non-negative kernel $K$ defined on $\mathbb{X}$ is mixing, if there exists a constant $0<\varepsilon \leq 1$, and a non-negative measure $\lambda$ on $\mathbb{X}$, such that
    $$
    \varepsilon \lambda(A) \leq K(x, A) \leq \frac{1}{\varepsilon} \lambda(A)
    $$
    for any $x \in \mathbb{X}$, and any Borel subset $A \subset \mathbb{X}$.
    \end{definition}
With
\[F(z, y, u)(\cdot)=\operatorname{Pr}\left\{X_{k+1} \in \cdot \mid Z_k=z, Y_{k+1}=y, U_k=u\right\},\]
we note the following:
\begin{assumption}\label{mixing_kernel_con}
    \begin{enumerate}
    \item $O(y|x)\geq \epsilon > 0$ for every $x\in \mathbb{X}$ and $y\in \mathbb{Y}$.
    \item The transition kernel $\mathcal{T}(.|.,u)$ is a mixing kernel (see Definition \ref{mixingKernelDef}) for every $u\in\mathbb{U}$.
\end{enumerate} 
\end{assumption}

\begin{lemma}\label{clm}\cite{demirci2023geometric}
    Under Assumption \ref{mixing_kernel_con}, 
    there exists a constant $r < 1$ such that
    \begin{align}
        h(F(\mu, y,u), F(\nu, y,u))\leq r h(\mu, \nu)
    \end{align}
   for every comparable $\mu,\nu\in {\cal P}(\mathbb{X})$ and for every
    $u\in \mathbb{U}$ and $y\in \mathbb{Y}$   where $h$ is the Hilbert projective metric. Here $r=\frac{1-\epsilon_{u}^2\epsilon }{1+\epsilon_{u}^2\epsilon},$ $\epsilon_{u}$ is the mixing constant of the kernel $\mathcal{T}(.|.,u)$.
\end{lemma}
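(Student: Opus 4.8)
The plan is to recognize the filter recursion $F$ as the projectively normalized action of a single strictly positive integral operator, and then to invoke the Birkhoff--Hopf contraction theorem exactly as in \cite{le2004stability} and \cite{demirci2023geometric}. Recall that on the cone of nonnegative finite measures the Hilbert projective metric $h$ is invariant under positive scaling, so only the projective ray of a measure matters and the normalizing constant in the filter update plays no role. Writing $Q(y\mid x')$ for the observation likelihood and $k_u(x,x')$ for the density of $\mathcal{T}(\cdot\mid x,u)$ with respect to the reference measure $\lambda$, I would first record that
\[ F(z,y,u)(dx') \;\propto\; Q(y\mid x')\Big(\int_{\mathbb{X}} k_u(x,x')\,z(dx)\Big)\lambda(dx'). \]
Introducing the unnormalized one-step filtering kernel $R_{y,u}(x,dx') := Q(y\mid x')\,k_u(x,x')\,\lambda(dx')$, this exhibits $F(z,y,u)$ as the normalization of $\int_{\mathbb{X}} R_{y,u}(x,\cdot)\,z(dx)$, i.e. as the image of $z$ under one linear positive operator. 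Consequently $F(\mu,y,u)$ and $F(\nu,y,u)$ are the images of $\mu$ and $\nu$ under the \emph{same} operator (up to normalization), and the problem reduces to bounding the Birkhoff contraction coefficient of $R_{y,u}$ uniformly in $(y,u)$.

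The second step is to produce two-sided bounds on the density of $R_{y,u}$. Using Assumption \ref{mixing_kernel_con}, the mixing property of $\mathcal{T}(\cdot\mid\cdot,u)$ gives $\epsilon_u \le k_u(x,x') \le \epsilon_u^{-1}$ and the observation lower bound gives $\epsilon \le Q(y\mid x') \le 1$, so that
\[ \epsilon\,\epsilon_u \;\le\; r_{y,u}(x,x') \;:=\; Q(y\mid x')\,k_u(x,x') \;\le\; \epsilon_u^{-1} \qquad \text{for all } x,x', \]
uniformly in $(y,u)$. In particular $R_{y,u}$ is strictly positive and maps the cone into a set of finite projective diameter, so $F(\mu,y,u)$ and $F(\nu,y,u)$ remain comparable whenever $\mu,\nu$ are, and $h$ is finite on the relevant pairs. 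I would then apply the Birkhoff--Hopf estimate (\cite[Lemma 3.8]{le2004stability}): for an integral operator whose kernel density lies in $[a,b]$, the projective diameter $\Delta$ of the image satisfies $\Delta \le 2\log(b/a)$, whence the induced map contracts $h$ with coefficient at most $\tanh\!\big(\tfrac14\Delta\big) = \tfrac{1-a/b}{1+a/b}$. With $a=\epsilon\,\epsilon_u$ and $b=\epsilon_u^{-1}$ one gets $a/b=\epsilon\,\epsilon_u^2$, yielding
\[ h\big(F(\mu,y,u),F(\nu,y,u)\big) \;\le\; \frac{1-\epsilon\,\epsilon_u^2}{1+\epsilon\,\epsilon_u^2}\, h(\mu,\nu) \;=\; r\,h(\mu,\nu), \]
uniformly over $u\in\mathbb{U}$ and $y\in\mathbb{Y}$, which is the claimed bound.

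The main obstacle is the clean passage from the pointwise density bounds on $R_{y,u}$ to the projective diameter $\Delta$ of its image and then to the $\tanh(\Delta/4)$ rate; this is the one place where the argument is not purely formal and where the specific algebraic form $\tfrac{1-\epsilon\,\epsilon_u^2}{1+\epsilon\,\epsilon_u^2}$ is produced. A secondary technical point is verifying that $R_{y,u}$ preserves comparability of measures, which is immediate from the strict positivity $r_{y,u}\ge \epsilon\,\epsilon_u>0$. I would also flag that this estimate is deliberately not sharp: in the Hilbert-metric comparison of $F(\mu,y,u)$ and $F(\nu,y,u)$ the common factor $Q(y\mid\cdot)$ cancels (it acts as a projective isometry), so one could drop $\epsilon$ and recover the tighter rate $\tfrac{1-\epsilon_u^2}{1+\epsilon_u^2}$; including the observation bound in the density estimate is what introduces $\epsilon$, and the resulting uniform form is what the downstream filter-stability bounds require.
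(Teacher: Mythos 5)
The paper gives no proof of this lemma---it is imported verbatim from \cite{demirci2023geometric}---but your argument is exactly the route the paper points to (Birkhoff--Hopf via \cite{le2004stability}), and it reproduces the stated constant correctly: with kernel density bounds $a=\epsilon\,\epsilon_u$, $b=\epsilon_u^{-1}$ one gets $\tanh\bigl(\tfrac12\log(b/a)\bigr)=\tfrac{1-\epsilon\epsilon_u^2}{1+\epsilon\epsilon_u^2}$. The only loose point is the unstated upper bound $Q(y\mid x')\le 1$, which holds in the paper's intended application (the quantized channel $\hat{O}$ is a pmf) and is in any case immaterial by your own closing observation that the likelihood factor acts as a projective isometry, so the (tighter) rate $\tfrac{1-\epsilon_u^2}{1+\epsilon_u^2}\le r$ holds regardless.
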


\begin{theorem}\label{d}\cite{demirciRefined2023}
    Under Assumption \ref{mixing_kernel_con}, 
    there exists a constant $r<1$ and $K$ such that 
    \begin{align}
        \bar{L}_{TV}^N \leq r^{N-1} K.
    \end{align}
    Here, $K=\frac{2}{\log 3} \sup h(\pi_1, \pi_1^*)$ and $r=\sup_{u\in \mathbb{U}} \frac{1-\epsilon_{u}^2\epsilon }{1+\epsilon_{u}^2\epsilon}$ where $\pi_1 = F(\pi_0,y,u)$ and $\pi^*_1= F(\pi^*,y,u)$ and the supremum is taken over all $\pi_0,\pi^*,y,u$. 
\end{theorem}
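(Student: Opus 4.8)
The plan is to turn the sample-path-wise stability term $\bar{L}^N_{TV}$ entirely into a Hilbert-metric contraction supplied by Lemma~\ref{clm}, and only convert back to total variation at the very end. First I would observe that, for fixed realizations $y_{[0,N]},u_{[0,N-1]}$, the conditional measure $P^{z}(\cdot\mid y_{[0,N]},u_{[0,N-1]})$ appearing in \eqref{TVUnifB} is exactly the filter $Z_N$ obtained by iterating the update map $F$ along this realization. Writing $Z_{k+1}=F(Z_k,y_{k+1},u_k)$, one has
\begin{align*}
Z_N=F\big(\cdots F(Z_1,y_2,u_1)\cdots,y_N,u_{N-1}\big),
\end{align*}
i.e.\ $N-1$ applications of $F$ anchored at $Z_1$, and likewise $Z_N^*$ starting from the prior $z^*$. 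Thus $\bar{L}^N_{TV}=\sup_{z}\sup_{y_{[0,N]},u_{[0,N-1]}}\|Z_N-Z_N^*\|_{TV}$.

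The core of the argument is an iteration of Lemma~\ref{clm}. Since each application of $F$ (with common $y,u$) contracts the Hilbert metric by the factor $r_u=\tfrac{1-\epsilon_u^2\epsilon}{1+\epsilon_u^2\epsilon}$, and since $\bar{L}^N_{TV}$ ranges over all admissible action sequences, I would pass to the uniform factor $r=\sup_{u\in\mathbb{U}}r_u<1$; applying the lemma $N-1$ times yields $h(Z_N,Z_N^*)\leq r^{N-1}\,h(Z_1,Z_1^*)$. The main technical obstacle here is \emph{comparability}: the Hilbert metric $h(\mu,\nu)$ is finite only when $\mu$ and $\nu$ are mutually absolutely continuous with bounded density ratio, and arbitrary priors $z,z^*$ (e.g.\ point masses) need not be comparable, so $h(Z_0,Z_0^*)$ may be infinite. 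This is precisely why the recursion must be anchored at $Z_1$ and not $Z_0$: under Assumption~\ref{mixing_kernel_con} the mixing condition $\varepsilon\lambda(A)\leq\mathcal{T}(A\mid x,u)\leq\varepsilon^{-1}\lambda(A)$ forces the one-step pushforwards, reweighted by the observation likelihood (bounded below by $\epsilon>0$), to be equivalent to $\lambda$ with a uniformly bounded density ratio; hence $Z_1$ and $Z_1^*$ are comparable and $h(Z_1,Z_1^*)$ is finite and uniformly bounded, which I would record as the quantity $\sup h(Z_1,Z_1^*)$.

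Finally, I would convert the Hilbert-metric estimate back to total variation via the standard comparison inequality (see \cite{le2004stability})
\begin{align*}
\|\mu-\nu\|_{TV}\leq\frac{2}{\log 3}\,h(\mu,\nu),
\end{align*}
which is exactly the source of the constant $\frac{2}{\log 3}$ in $K$. Combining the three displays gives $\|Z_N-Z_N^*\|_{TV}\leq\frac{2}{\log 3}\,r^{N-1}\,h(Z_1,Z_1^*)$; taking the supremum over $z$ and over $y_{[0,N]},u_{[0,N-1]}$ on the left and bounding $h(Z_1,Z_1^*)$ by its supremum on the right yields $\bar{L}^N_{TV}\leq r^{N-1}K$ with $K=\frac{2}{\log 3}\sup h(Z_1,Z_1^*)$, as claimed. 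Beyond this bookkeeping, the only genuine subtlety is ensuring that both the comparability/boundedness of $h(Z_1,Z_1^*)$ and the contraction factor $r$ hold \emph{uniformly} over all priors and all realizations, so that the suprema defining $\bar{L}^N_{TV}$ transfer cleanly through each inequality.
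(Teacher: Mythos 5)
Your proposal is correct and follows exactly the argument behind the cited result: iterate the Hilbert-metric contraction of Lemma~\ref{clm} along the common realization $(y_{[0,N]},u_{[0,N-1]})$, anchor at $Z_1$ (where the mixing condition on $\mathcal{T}$ first guarantees comparability and a finite, uniformly bounded $h(Z_1,Z_1^*)$ --- note the likelihood factor actually cancels in the density ratio, so the mixing bound alone suffices), and convert to total variation via $\|\mu-\nu\|_{TV}\leq\frac{2}{\log 3}h(\mu,\nu)$, which is precisely where the constants $r^{N-1}$ and $K=\frac{2}{\log 3}\sup h(Z_1,Z_1^*)$ come from. The paper itself defers this proof to \cite{demirciRefined2023}, and your reconstruction, including the identification of comparability as the one genuine subtlety, matches that argument.
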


\begin{corollary}\label{cor_kara_y}\cite{demirciRefined2023}
 Under Assumption \ref{mixing_kernel_con}, 
    there exists a constant $r<1$ and $K$ such that 
    \begin{align}
    E_{\pi_0^{-}}^{\hat{\gamma}}\left[\left|\tilde{J}_\beta^N\left(\hat{Z}_0, \tilde{\phi}^N\right)
    -J_\beta^*\left(\hat{Z}_0\right)\right| \right] 
    \leq \frac{2 \|c\|_{\infty}}{(1-\beta)^2} r^{N-1} K.
    \end{align}
        Here, $K=\frac{2}{\log 3} \sup_{\pi_0^-,y,u} h(F(\pi_0^-.y,u),F(\pi^*,y,u))$ and $r=\sup_{u\in \mathbb{U}} \frac{1-\epsilon_{u}^2\epsilon }{1+\epsilon_{u}^2\epsilon}$.
\end{corollary}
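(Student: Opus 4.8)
The plan is to obtain the bound by chaining together three results already available in the excerpt: the general finite-window performance-loss bound of Proposition \ref{cont_bound}, the sample-path-wise uniform filter stability quantity $\bar{L}_{TV}^N$ from \eqref{TVUnifB}, and the geometric decay of $\bar{L}_{TV}^N$ supplied by Theorem \ref{d}. Concretely, I would first apply Proposition \ref{cont_bound} verbatim to the initial state $\hat{z}_0=(z_0^-,I_0^N)$ under the finite-memory policy $\tilde{\phi}^N$, which yields
\begin{align*}
E_{z_0^-}^{\hat{\gamma}}\left[\left|\tilde{J}_\beta^N(\hat{z}_0,\tilde{\phi}^N)-J_\beta^*(\hat{z}_0)\right|\,\big|\,I_0^N\right]\leq \frac{2\|c\|_\infty}{(1-\beta)}\sum_{t=0}^\infty\beta^t L_t,
\end{align*}
where $L_t$ is the filter-stability constant defined in \eqref{loss_constant}. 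Here I am identifying $\tilde{J}_\beta^N(\hat{z}_0,\tilde{\phi}^N)$ with the realized cost $J_\beta(\hat{z}_0,\tilde{\phi}^N)$ of the extended finite-window policy, which is precisely the quantity bounded in Proposition \ref{cont_bound}.

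Next I would replace the time-dependent $L_t$ by the uniform term $\bar{L}_{TV}^N$. The key observation is that $\bar{L}_{TV}^N$ in \eqref{TVUnifB} is a supremum over all initial beliefs $z\in\P(\mathds{X})$ and all feasible window trajectories $y_{[0,N]},u_{[0,N-1]}$, with the reference predictor $z^*$ coinciding with the fixed $\pi^*$ used throughout the approximation. Inside the expectation defining $L_t$, for each realization the random predictor $\pi_t^-$ is a point of $\P(\mathds{X})$ and the conditioning variables $Y_{[t,t+N]},U_{[t,t+N-1]}$ form a feasible window, so the integrand is pointwise dominated by $\bar{L}_{TV}^N$; taking expectations gives $L_t\leq \bar{L}_{TV}^N$ for every $t$. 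Summing the geometric series $\sum_{t=0}^\infty\beta^t=\frac{1}{1-\beta}$ then upgrades the prefactor from $\frac{2\|c\|_\infty}{1-\beta}$ to $\frac{2\|c\|_\infty}{(1-\beta)^2}$, producing
\begin{align*}
E_{z_0^-}^{\hat{\gamma}}\left[\left|\tilde{J}_\beta^N(\hat{z}_0,\tilde{\phi}^N)-J_\beta^*(\hat{z}_0)\right|\,\big|\,I_0^N\right]\leq \frac{2\|c\|_\infty}{(1-\beta)^2}\,\bar{L}_{TV}^N.
\end{align*}

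Finally I would invoke Theorem \ref{d}, which under Assumption \ref{mixing_kernel_con} gives $\bar{L}_{TV}^N\leq r^{N-1}K$ with the stated constants $r=\sup_{u\in\mathbb{U}}\frac{1-\epsilon_u^2\epsilon}{1+\epsilon_u^2\epsilon}<1$ and $K=\frac{2}{\log 3}\sup h(Z_1,Z_1^*)$, and substitute to reach the claimed inequality. The only nonroutine step is the domination $L_t\leq\bar{L}_{TV}^N$: I expect the main obstacle to be carefully matching the conditioning structures, namely verifying that the posterior appearing in $L_t$ starting from the \emph{stochastic} predictor $\pi_t^-$ is genuinely of the form controlled by the \emph{uniform} sup over deterministic initial beliefs in $\bar{L}_{TV}^N$, and that the reference measures ($\pi^*$ versus $z^*$) agree. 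Once this identification is in place, both remaining steps are immediate applications of the cited bounds and a geometric sum.
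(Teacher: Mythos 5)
Your derivation is correct and is exactly the intended route: the paper itself gives no proof (the corollary is cited from \cite{demirciRefined2023}), but chaining Proposition \ref{cont_bound}, the pointwise domination $L_t \leq \bar{L}_{TV}^N$ (which upgrades $\frac{2\|c\|_\infty}{1-\beta}\sum_t\beta^t L_t$ to $\frac{2\|c\|_\infty}{(1-\beta)^2}\bar{L}_{TV}^N$), and Theorem \ref{d} is precisely how the stated constant $\frac{2\|c\|_\infty}{(1-\beta)^2}r^{N-1}K$ arises. Your two flagged identifications — $\tilde{J}_\beta^N(\hat{z}_0,\tilde{\phi}^N)$ with the realized cost $J_\beta(\hat{z}_0,\tilde{\phi}^N)$ of Proposition \ref{cont_bound}, and $z^*$ with $\pi^*$ — are indeed just notational reconciliations and pose no obstacle.
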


The implication of the quantized approximation is that we can ensure that upon quantization each bin can satisfy the condition that $\hat{O}(y|x) > \epsilon > 0$. That is, the original measurement kernel may not satisfy the contraction condition but the finite measurement model may, leading to the applicability of the bound above. Note that the conditions are complementary when compared with those building on the Dobrushin coefficient based analysis above.

\subsection{Comparison of Discretization Rate and Finite Window Length}

Consider, for example, the original channel $O$, is fully ``non-informative'' and provides a random observation $y$ uniformly distributed on $\mathds{Y}$, then the discretization of the observation space will not lead to any improvement as the Lipschitz constant, $\alpha_{\mathds{Y}}=0$. However, using a longer window, will improve the performance. In particular, for this case the Dobrushin coefficient of the channel becomes $\delta(O)=1$, thus $\alpha=(1-\delta(\tilde{T}))$.

For the other extreme case, we consider a ``fully informative'' channel. Since, we have a density assumption on the channel, consider instead an additive channel of the from $y_t=x_t+v_t$ where $v_t$ is a zero mean Gaussian with small variance. Note that, although we assume $\mathds{Y}$ to be compact, we can extend the analysis by focusing on high probability events on compact subsets. For this case, finer quantization of the observation space will lead to greater improvements on the performance as the Lipschitz constant of the density of the channel will be large. However, as the channel is already informative, using a longer memory will not provide further information on the hidden state $x$. 

For the extreme cases, we observe that for ``fully informative'' channels it is better to increase the discretization rate rather than increasing the window length. For ``fully non-informative'' channels, however, one should increase the window length instead of increasing the discretization rate. Even though, we observe this trend for the extreme cases, for general observation models, one needs give a quantitative definition of an ``informative'' channel. \cite{Ramon2008discrete,mcdonald2018stability} study stochastic observability for measurement channels for non-linear dynamics and establish the filter stability. This notion can be used to further compare the effect of discretization and memory use for ``observable'' channels. We should also note that increasing the size of the finite observation set $\hat{\mathds{Y}}_M$ from $M$ to $M+1$, will increase the size of the state space of the fully observed MDP constructed by the algorithm in Section \ref{app_alg} by a factor of $\left(\frac{M+1}{M}\right)^N$, and increasing the window size from $N$ to $N+1$ will increase the state space size by a factor of $M\times|\mathds{U}|$.
\begin{remark} We note that if the goal is only asymptotic convergence of the approximation error, we can relax the geometric convergence conditions of the filter stability (and only consider stochastic observability of the controlled kernels (\cite{mcdonald2018stability}) so that $L_t\to 0$ as the window length increases with no rate of convergence),  $\mathds{U}$ is compact (not necessarily finite) and the density of the measurement kernels, denoted with $f$ is only continuous in $y$, with no Lipschitz regularity. \end{remark}

\section{Q-Learning for the Approximate Models}\label{QLearningApproximateM}

\subsection{Q-Learning using Belief Discretization}
We will first introduce a discretization based Q learning over the belief spaces. We first set a finite number of quantization bins, $\{Z_1,\dots,Z_k\}\subset\P(\mathds{X})$ such that $\cup_{i=1}^k = \P(\mathds{X})$. We use the map $\Psi:\P(\mathds{X}) \to \{Z_1,\dots,Z_k\}$ for quantization. 

A natural way of choosing the quantization for the belief space is as follows: (i) quantize the hidden space $\mathds{X}$ to a finite set, (ii) for the resulting simplex use further discretization over the probability mass of the resulting bins (see \cite{saldi2019pomdp}).

Finally, we use the following Q learning algorithm over the finite set of quantization bins. We denote the $i$ to denote the quantization bin $Z_i$:
\begin{align*}
&Q_{k+1}(i,u)=(1-\alpha_k(i,u))Q_k(i,u)+\alpha_k(i,u)\left(C_k+\beta \min_v Q_k(\psi(\pi_{t+1}),v)\right).
\end{align*}
This algorithm can be shown to converge under suitable ergodicity conditions on the belief process (see \cite[Section 3.4]{karayukselNonMarkovian}). Furthermore, the near optimality of the learned policies can be guaranteed using the weak continuity of the belief kernel (\cite[Section 3.3]{KSYContQLearning}). 

We emphasize, however, that this algorithm assumes that the controller has access to the belief process $\{\pi_t\}$. In other words, the controller can map the the history $h_t$ to the belief state $\pi_t$, and this mapping can be challenging in general.

\subsection{Q learning Using Finite Memory with Discrete Observations}

In this section, we show that the proposed approximate model using the finite memory can be learned using a Q learning algorithm.

Assume that we start keeping track of the last $N+1$ observations and the last $N$ control action variables after at least $N+1$ time steps. That is, at time $t$, we keep track of the information variables
\begin{align*}
I^N_t=\begin{cases}&\{y_t, y_{t-1},\dots,y_{t-N},u_{t-1},\dots,u_{t-N}\} \quad \text{ if } N>0\\
&y_t  \quad \text{ if } N=0.\end{cases}
\end{align*}
For every observation $y_t$, we use the representative element from the finite $\hat{\mathds{Y}}_M$, using the map $\psi_{\mathds{Y}}$ (see (\ref{quant_map})). With an abuse of notation, we use $\psi_{\mathds{Y}}(h_t^N)$ to denote the finite history information with discrete observations. 

For these new approximate states, we follow the usual Q learning algorithm such that for any $I\in\mathds{Y}^{N+1}\times\mathds{U}^N$ and $u\in\mathds{U}$
\begin{align}\label{q_alg}
&Q_{k+1}(\psi_{\mathds{Y}}(I),u)=(1-\alpha_k(\psi_{\mathds{Y}}(I),u))Q_k(\psi_{\mathds{Y}}(I),u)\nonumber\\
&+\alpha_k(\psi_{\mathds{Y}}(I),u)\left(C_k+\beta \min_v Q_k(\psi_{\mathds{Y}}(I_1),v)\right),
\end{align}
where $I_1 = \{Y_{t+1}, y_{t},\dots,y_{t-N+1},u_{t},\dots,u_{t-N+1}\}$ which are the finite memory information variables we observe following $I$ and the control $u$. $C_k$ denotes the cost we observe at time time $k$ after we apply $u$, hence, if the hidden state is $x_k$, we have $C_k=c(x_k,u)$.

To choose the control actions, we use polices that choose the control actions randomly and independent of everything else such that at time $t$ $u_t= u_i, \text{ w.p } \sigma_i$ for any $u_i\in\mathds{U}$ with $\sigma_i>0$ for all $i$.

\begin{assumption}\label{partial_q}
\hfill
\begin{itemize}
\item [1.] $\alpha_t(\psi_{\mathds{Y}}(I),u)=0$ unless $(\psi_{\mathds{Y}}(h_t),u_t)=(\psi_{\mathds{Y}}(I),u)$. Furthermore,
\[\alpha_t(\psi_{\mathds{Y}}(I),u) = {1 \over 1+ \sum_{k=0}^{t} 1_{\{\psi_{\mathds{Y}}(I_k)=\psi_{\mathds{Y}}(I), u_k=u\}} }\]
We note that, this means that $\alpha_k(\psi_{\mathds{Y}}(I),u)=\frac{1}{k}$ if $\psi_{\mathds{Y}}(I_k)=\psi_{\mathds{Y}}(I),u_k=u$, if $k$ is the instant of the $k$th visit to $(\psi_{\mathds{Y}}(I),u)$.
\item [2.] Under every stationary \{memoryless or finite memory exploration\} policy, say $\gamma$, the true state process, $\{X_t\}_t$, is positive Harris recurrent and in particular admits a unique invariant measure $\pi_\gamma^*$.
\item [3.] During the exploration phase, every $(\psi_{\mathds{Y}}(I),u)$ pair is visited infinitely often.
\end{itemize}
\end{assumption}




\begin{theorem}\label{main_thm}
Under Assumption \ref{partial_q},
\begin{itemize}
\item[i.] The algorithm given in (\ref{q_alg}) converges almost surely to some $Q^*$ which are the optimal Q values of an MDP constructed in Section \ref{app_section} with the discrete channel $\hat{O}$.
\item[ii.] For any policy $\gamma^N$ that is constructed using $Q^*$ if we assume that the controller has access to at least $N+1$ observations and $N$ control action variables, when it starts acting, we have
\begin{align*}
&J_\beta(\mu,{O},\gamma^N)-J_\beta^*(\mu,{O})\\
&\leq \frac{\beta}{(1-\beta)^2}\|c\|_\infty \alpha_{\mathds{Y}} L_\mathds{Y}+ \frac{2\|c\|_\infty}{(1-\beta)}\sum_{t=0}^\infty \beta^t L_t,
\end{align*}
\end{itemize}
\end{theorem}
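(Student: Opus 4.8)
The plan is to treat the two parts separately, with essentially all of the work concentrated in part (i); part (ii) will then follow almost immediately by quoting Theorem \ref{main_result}.

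For part (i), I would view the recursion (\ref{q_alg}) as an asynchronous stochastic approximation scheme on the \emph{finite} state–action set $\psi_{\mathds{Y}}(\mathds{Y}^{N+1}\times\mathds{U}^N)\times\mathds{U}$. The step sizes prescribed in Assumption \ref{partial_q}.1 are exactly the count-based $1/k$ rule at each visited pair, so they satisfy the Robbins–Monro conditions componentwise, while Assumption \ref{partial_q}.3 guarantees that every component is updated infinitely often. The essential difficulty is that the finite-memory information process $\{\psi_{\mathds{Y}}(I_t^N)\}$ is \emph{not} a controlled Markov chain, since the true driving state $X_t$ is hidden, so the classical Watkins-type argument does not apply directly. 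To circumvent this I would invoke Assumption \ref{partial_q}.2: under any stationary exploration policy the hidden chain $\{X_t\}$ is positive Harris recurrent with a unique invariant measure $\pi_\gamma^*$, and consequently the joint process $(X_t,\psi_{\mathds{Y}}(I_t^N))$ admits a unique invariant law. This stationarity lets one define \emph{averaged} transition and cost data for the finite-memory state by conditioning the hidden-state dynamics on the observed window under the invariant law; the resulting predictor, namely the conditional distribution of the pre-window state given the information pattern under $\pi_\gamma^*$, is precisely the fixed measure $\pi^*$ used in the construction of Section \ref{app_section}.

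With this averaged model in hand, I would appeal to the non-Markovian Q-learning convergence machinery of \cite{karayukselNonMarkovian} (see also \cite[Section 3.4]{karayukselNonMarkovian} and the belief-based analogue in \cite{KSYContQLearning}): the ergodic averaging produced by the invariant measure replaces the missing Markov property, so that (\ref{q_alg}) converges almost surely to the unique fixed point of the discounted Bellman operator of the averaged finite-memory MDP. That fixed point is, by construction, the optimal $Q$-function $Q^*$ of the finite fully observed MDP of Section \ref{app_section} built with the discrete channel $\hat{O}$ and predictor $\pi^*$, which establishes (i). For part (ii), observe that the greedy policy $\gamma^N(\cdot)=\argmin_u Q^*(\psi_{\mathds{Y}}(\cdot),u)$ is, by part (i), an optimal stationary policy for that finite-window approximate MDP, so $\gamma^N$ coincides in performance with the policy $\hat{\gamma}^N$ produced by the algorithm of Section \ref{app_alg}. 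Invoking Theorem \ref{main_result} for this policy (which brings in the regularity hypotheses of Assumption \ref{main_assmp} needed for its first term) then yields directly
\begin{align*}
J_\beta(\mu,O,\gamma^N)-J_\beta^*(\mu,O)\leq \frac{\beta}{(1-\beta)^2}\|c\|_\infty \alpha_{\mathds{Y}} L_\mathds{Y}+ \frac{2\|c\|_\infty}{(1-\beta)}\sum_{t=0}^\infty \beta^t L_t,
\end{align*}
the asserted inequality, with $L_t$ defined relative to the same fixed predictor $\pi^*$.

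The main obstacle throughout is the limit-identification in part (i): proving that the ergodic averaging induced by $\pi_\gamma^*$ yields exactly the predictor $\pi^*$ together with the transition and cost data of the Section \ref{app_section} model, so that the stochastic-approximation limit is genuinely the optimal $Q$-function of that MDP rather than of some other averaged process. Once this identification and the accompanying convergence theorem are secured, part (ii) reduces to a single invocation of Theorem \ref{main_result}.
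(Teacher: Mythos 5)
Your proposal is correct and takes essentially the same route as the paper: for part (i) the paper simply notes that the discretized observations make the POMDP one with a finite observation space and invokes the finite-memory Q-learning convergence result of \cite[Theorem 4.1]{kara2021convergence}, whose internal argument is exactly the ergodicity-based asynchronous stochastic-approximation analysis (with the invariant-measure-induced predictor identified as $\pi^*$) that you unfold. Part (ii) is then, as in your write-up, a direct application of Theorem \ref{main_result}.
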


\begin{proof}
The result is a direct implication of \cite[Theorem 4.1]{kara2021convergence}. In \cite[Theorem 4.1]{kara2021convergence}, it is shown that the Q learning using finite memory information variables for a POMDP with a finite observation space, converges. The Q iterations constructed here then can be thought to learn a POMDP model with the discrete channel $\hat{O}$ which has a finite observation space. Hence, (ii) is a result of Theorem \ref{main_result}.
\end{proof}

\sy{The positive Harris recurrence condition can be relaxed; please see \cite{karayukselNonMarkovian} \cite[Lemma 6]{creggZeroDelayNoiseless}}.

\section{Concluding Remarks and a Discussion}

We studied two approximation algorithms for POMDPs with continuous state and observation spaces. The first method discretizes the hidden state space and genrates a belief MDP defined over a simplex. The near optimality of this method relies on the regularity of the transition and the channel kernel of the POMDP. Furthermore, this method is tailored more towards settings where the model is known. The second method is done by dicretizing the observation space and using finite memory of discrete information variables. The near optimality of this method relies on the controlled filter stability under the discretized observations. Furthermore, we provided a Q learning algorithm for the constructed  approximate finite models.

\appendix 

\section{Proof of Lemma \ref{imm_lem}}\label{imm_lem_proof}
Consider some $x\in A_i$, we then write
\begin{align*}
W_1\left(\mathcal{T}(\cdot|x,u), \hat{\mathcal{T}}(\cdot|x,u)\right)& = \sup_{\|f\|_{Lip}\leq 1} \left|\int f(x_1)\mathcal{T}(dx_1|x,u) - \int f(x_1)\hat{\mathcal{T}}(dx_1|x,u)\right|\\
&=\sup_{\|f\|_{Lip}\leq 1} \left|\int f(x_1)\mathcal{T}(dx_1|x,u) - \int f(x_1){\mathcal{T}}(dx_1|z,u)\pi_i(dz)\right|\\
&\leq\sup_{\|f\|_{Lip}\leq 1} \int_{z\in A_i} \left| \int_{x_1\in \mathds{X}} f(x_1)\mathcal{T}(dx_1|x,u) - \int f(x_1){\mathcal{T}}(dx_1|z,u)\right|\pi_i(dz)\\
&\leq \int_{z\in A_i}W_1\left(\mathcal{T}(\cdot|x,u), {\mathcal{T}}(\cdot|z,u)\right)\pi_i(dz)\\
& \leq K_T \sup_{z\in A_i}\|x-z\| \leq K_T L_{\mathds{X}}.
\end{align*}
The proof for the bound on the approximate channel follows from identical steps.

\section{Proof of Lemma \ref{iter_lem}}\label{iter_lem_proof}
We start by defining the following functions for rotational convenience. 
\begin{align*}
F_k(x_{t-k},y_{[0,t-k]}) = E\left[ c(X_t,\gamma(Y_{[0,t]})) |  x_{t-k},y_{[0,t-k]}\right]\\
\hat{F}_k(x_{t-k},y_{[0,t-k]}) = \hat{E}\left[ \hat{c}(X_t,\gamma(Y_{[0,t]})) |  x_{t-k},y_{[0,t-k]}\right]
\end{align*}
in particular
\begin{align*}
&F_0(x_t,y_{[0,t]}) = c(x_t,y_{[0,t]}), \qquad \hat{F}_0(x_t,y_{[0,t]}) = \hat{c}(x_t,y_{[0,t]}).
\end{align*}
Furthermore, we are interested in the difference $\left|E[F_t(x_0,y_0)]-\hat{E}[\hat{F}_t(x_0,y_0)]\right|$.

\adk{
We first note the following due to iterated expectations 
\begin{align}\label{it_exp}
&F_k(x_{t-k},y_{[0,t-k]}) = E\left[ c(X_t,\gamma(Y_{[0,t]})) |  x_{t-k},y_{[0,t-k]}\right] \nonumber\\
&= E\left[ E\left[ c(X_t,\gamma(Y_{[0,t]}))| X_{t-k+1}, Y_{[0,t-k+1]}  \right] | x_{t-k},y_{[0,t-k]}\right]\nonumber\\
&=E\left[ F_{k-1}(X_{t-k+1}, Y_{[0,t-k+1]})  | x_{t-k},y_{[0,t-k]} \right]\nonumber\\
&= \int F_{k-1}(x_{t-k+1}, y_{[0,t-k+1]})  O(dy_{t-k+1}|x_{t-k+1}) \mathcal{T}(dx_{t-k+1}|x_{t-k},\gamma(y_{[0,t-k]}))
\end{align}}
which can also be written for the functions $\hat{F}_k$. Using this iterative form, we  find an upper-bound for the Lipschitz constant of $F_k(x_{t-k},y_{[0,t-k]}) $ with respect to $x_{t-k}$ uniformly over all $y_{[0,t-k]}$:
\begin{align*}
&\left|F_k(x_{t-k},y_{[0,t-k]})  - F_k(x'_{t-k},y_{[0,t-k]}) \right| \\
&= \bigg|\int F_{k-1}(x_{t-k+1}, y_{[0,t-k+1]})  O(dy_{t-k+1}|x_{t-k+1}) \mathcal{T}(dx_{t-k+1}|x_{t-k},\gamma(y_{[0,t-k]}))\\
&\qquad - \int F_{k-1}(x_{t-k+1}, y_{[0,t-k+1]})  O(dy_{t-k+1}|x_{t-k+1}) \mathcal{T}(dx_{t-k+1}|x'_{t-k},\gamma(y_{[0,t-k]}))\bigg|\\
&\leq  \|\int F_{k-1}(x_{t-k+1}, y_{[0,t-k+1]}) O(dy_{t-k+1}|x_{t-k+1})\|_{Lip} K_T |x_{t-k} - x'_{t-k}|\\
&\leq \left(K_O \|c\|_\infty+ \|F_{k-1}\|_{Lip}  \right) K_T \|x_{t-k} - x'_{t-k}\|
\end{align*}
where we used Assumption \ref{channel_kernel_reg}(b) for the first inequality. For the last step, we used the following:
\begin{align*}
&\bigg|\int F_{k-1}(x_{t-k+1}, y_{[0,t-k+1]}) O(dy_{t-k+1}|x_{t-k+1})  \int F_{k-1}(x'_{t-k+1}, y_{[0,t-k+1]}) O(dy_{t-k+1}|x'_{t-k+1})\bigg|\\
&\leq \bigg|\int F_{k-1}(x_{t-k+1}, y_{[0,t-k+1]}) O(dy_{t-k+1}|x_{t-k+1}) - \int F_{k-1}(x_{t-k+1}, y_{[0,t-k+1]}) O(dy_{t-k+1}|x'_{t-k+1})\bigg|\\
&  + \bigg|\int F_{k-1}(x_{t-k+1}, y_{[0,t-k+1]}) O(dy_{t-k+1}|x'_{t-k+1}) - \int F_{k-1}(x'_{t-k+1}, y_{[0,t-k+1]}) O(dy_{t-k+1}|x'_{t-k+1})\bigg|\\
&\leq \|F_{k-1}\|_\infty \|O(\cdot|x_{t-k+1}) - O(\cdot|x'_{t-k+1})\|_{TV} + \|F_{k-1}\|_{Lip}  \|x_{t-k+1} - x'_{t-k+1}\|\\
& \leq\left( K_O \|c\|_\infty+  \|F_{k-1}\|_{Lip} \right)\|x_{t-k+1} - x'_{t-k+1}\|.
\end{align*}
One can then show that 
\begin{align}\label{lip_bound}
\|F_k\|_{Lip}&\leq K_O\|c\|_\infty\left( K_T +\dots  + K_T^k\right) + \|F_0\|_{Lip}K_T^k\nonumber\\
&=  K_O\|c\|_\infty\left( K_T +\dots  + K_T^k\right) + K_c K_T^k
\end{align}
The iterative form in (\ref{it_exp}) implies that
\begin{align*}
&\left|F_k(x_{t-k},y_{[0,t-k]}) - \hat{F}_k(x_{t-k},y_{[0,t-k]})\right|\\
&= \bigg|  \int F_{k-1}(x_{t-k+1}, y_{[0,t-k+1]}) O(dy_{t-k+1}|x_{t-k+1})\mathcal{T}(dx_{t-k+1}|x_{t-k},\gamma(y_{[0,t-k]}))\\
&\qquad -  \int \hat{F}_{k-1}(x_{t-k+1}, y_{[0,t-k+1]}) \hat{O}(dy_{t-k+1}|x_{t-k+1})\hat{\mathcal{T}}(dx_{t-k+1}|x_{t-k},\gamma(y_{[0,t-k]}))\bigg|\\
&\leq \bigg|  \int F_{k-1}(x_{t-k+1}, y_{[0,t-k+1]}) O(dy_{t-k+1}|x_{t-k+1})\mathcal{T}(dx_{t-k+1}|x_{t-k},\gamma(y_{[0,t-k]}))\\
&\qquad -  \int {F}_{k-1}(x_{t-k+1}, y_{[0,t-k+1]}) {O}(dy_{t-k+1}|x_{t-k+1})\hat{\mathcal{T}}(dx_{t-k+1}|x_{t-k},\gamma(y_{[0,t-k]}))\bigg|\\
& + \bigg|  \int F_{k-1}(x_{t-k+1}, y_{[0,t-k+1]}) O(dy_{t-k+1}|x_{t-k+1})\hat{\mathcal{T}}(dx_{t-k+1}|x_{t-k},\gamma(y_{[0,t-k]}))\\
&\quad\qquad -  \int \hat{F}_{k-1}(x_{t-k+1}, y_{[0,t-k+1]}) \hat{O}(dy_{t-k+1}|x_{t-k+1})\hat{\mathcal{T}}(dx_{t-k+1}|x_{t-k},\gamma(y_{[0,t-k]}))\bigg|\\
&\leq   \|\int F_{k-1}(x_{t-k+1}, y_{[0,t-k+1]}) O(dy_{t-k+1}|x_{t-k+1})\|_{Lip} \sup_{x,u}W_1(\mathcal{T}(\cdot|x,u),\hat{\mathcal{T}}(\cdot|x,u))\\
&\qquad + \sup_x \left| \int F_{k-1}(x, y_{[0,t-k+1]}) O(dy_{t-k+1}|x) - \int \hat{F}_{k-1}(x, y_{[0,t-k+1]}) \hat{O}(dy_{t-k+1}|x)\right|\\
& \leq \sup_{x,u}W_1(\mathcal{T}(\cdot|x,u),\hat{\mathcal{T}}(\cdot|x,u))\left( \|F_{k-1}\|_{Lip} + K_O \|c\|_\infty\right) + \|c\|_\infty\sup_x\|O(\cdot|x) - \hat{O}(\cdot|x)\|_{TV} + \|F_{k-1} - \hat{F}_{k-1}\|_\infty\\
&\leq K_T L_{\mathds{X}}  \|F_{k-1}\|_{Lip} + K_T K_O\|c\|_\infty L_{\mathds{X}} + K_O\|c\|_\infty L_{\mathds{X}} + \|F_{k-1} - \hat{F}_{k-1}\|_\infty
\end{align*}
where we used Lemma \ref{imm_lem}. Using (\ref{lip_bound}), we can find a further upper-bound as:
\begin{align*}
&\left\|F_k - \hat{F}_k\right\|_\infty\leq K_T L_{\mathds{X}} \left( K_O\|c\|_\infty\left( K_T +\dots  + K_T^{k-1}\right) + K_c K_T^{k-1}\right) \\
&\qquad\qquad\qquad+  K_T K_O\|c\|_\infty L_{\mathds{X}} + K_O\|c\|_\infty L_{\mathds{X}} + \|F_{k-1} - \hat{F}_{k-1}\|_\infty\\
&\leq K_O\|c\|_\infty L_\mathds{X} \left( K^2_T +\dots  + K_T^{k}  \right) + K_cK_T^kL_\mathds{X} +K_O\|c\|_\infty K_TL_\mathds{X} + K_O\|c\|_\infty L_\mathds{X} +  \|F_{k-1} - \hat{F}_{k-1}\|_\infty \\
&= K_O \|c\|_\infty L_\mathds{X}\sum_{m=0}^k (K_T)^m  + K_c (K_T)^{k} L_\mathds{X} + \| F_{k-1} - \hat{F}_{k-1}\|_\infty.
\end{align*}
Denoting by $A_k:=\sum_{m=0}^k K_T^m$, we can then write:
\begin{align*}
\left\|F_k - \hat{F}_k\right\|_\infty&\leq K_O\|c\|_\infty L_\mathds{X} \sum_{n=1}^k A_n + K_c L_\mathds{X}\sum_{n=1}^k (K_T)^n  + \|F_0 - \hat{F}_0\|_\infty\\
&\leq K_O \|c\|_\infty L_\mathds{X} \sum_{n=1}^k A_n + K_c L_\mathds{X} \sum_{n=1}^k(K_T)^n  + K_c L_\mathds{X}\\
&\leq  K_O\|c\|_\infty L_\mathds{X} \sum_{n=0}^k A_n + K_c L_\mathds{X}\sum_{n=0}^k (K_T)^n \\
&= K_O\|c\|_\infty L_\mathds{X} \sum_{n=0}^k \sum_{m=0}^n K_T^m + K_c L_\mathds{X}\sum_{n=0}^k K_T^n.
\end{align*}

\bibliographystyle{plain}

\bibliography{SerdarBibliography,AliBibliography,references_acc,references}

\end{document}